\newtheorem{definition}{Definition}[section]
\newtheorem{theorem}{Theorem}[section]
\newtheorem{lemma}{Lemma}[section]
\newtheorem{corollary}{Corollary}[section]
\newtheorem{remark}{Remark}[section]
\begin{document}

\sloppy

\title[Stochastic evolution equations with Wick-analytic nonlinearities]
{Stochastic evolution equations with Wick-analytic nonlinearities}

\author{Tijana LEVAJKOVI\'C \and Stevan PILIPOVI\'C \and Dora SELE\v SI \and Milica \v ZIGI\'C}

\address[Tijana Levajkovi\'c]{Applied Statistics Research Unit, Institute of Statistics and Mathematical Methods in Economics,  TU Vienna, Austria.}
\address[Stevan Pilipovi\'c, Dora Sele\v si, Milica \v Zigi\'c]{Department of Mathematics and Informatics, Faculty of Sciences, University of Novi Sad, Serbia.}

\email{tijana.levajkovic@tuwien.ac.at}
\email{stevan.pilipovic@dmi.uns.ac.rs}
\email{dora.selesi@dmi.uns.ac.rs}
\email[Corresponding author]{milica.zigic@dmi.uns.ac.rs}

\thanks{Corresponding author: Milica \v Zigi\'c, milica.zigic@dmi.uns.ac.rs}

\date{05.02.2021.}

\begin{abstract}
We study nonlinear stochastic partial differential equations with Wick-analytic type nonlinearities set in the framework of white noise analysis.  These equations include the stochastic  Fisher--KPP equations,   stochastic Allen--Cahn, stochastic Newell--Whitehead--Segel, and stochastic Fujita--Gelfand equations.  By implementing the theory of $C_0-$semigroups and evolution systems into the chaos expansion theory in infinite dimensional spaces, we  prove existence and uniqueness of solutions for this class of stochastic partial differential equations. 
\end{abstract}

\keywords{weighted Hida spaces; stochastic nonlinear evolution equations; Wick product; $C_0-$semigroup; infinitesimal generator}

\subjclass[2020]{60H15; 60H40; 60G20; 37L55; 47J35}

\maketitle

\section{Introduction}\label{0.0}

As a conclusion to our previous papers \cite{Milica} and \cite{Milica2} we perform a follow-up study to extend the results obtained therein. In \cite{Milica} we treated the case of stochastic evolution equations with Wick-multiplicative noise of the form $u_t = {\mathbf A} u +   {\mathbf B} \lozenge u +f$, where ${\mathbf A}$  was a densely defined unbounded operator generating a $C_0$-semigroup (e.g. a strictly elliptic second order partial differential operator), while ${\mathbf B}$ was a linear bounded operator modeling convolution-type perturbations via the Wick product in the equation. Examples of  these equations included the heat equation with random potential, the Schr\"odinger equation, the transport equation driven by white noise, and many others. Afterwards, in  \cite{Milica2} we studied stochastic evolution equations with Wick-polynomial nonlinearities of the form $u_t = {\mathbf A} u + u^{\lozenge n} +f$, including several reaction--diffusion examples such as the Fisher--KPP equations, FitzHugh--Nagumo equations, and many more. In both papers we proved existence and uniqueness of the solutions and also developed an exact algorithm for obtaining explicit solutions in terms of generators of a corresponding $C_0$-semigroup and polynomial chaos expansion. In the current paper we study fully nonlinear stochastic evolution equations of the form $u_t = {\mathbf A} u +   \Phi^{\lozenge}(u) +f$, where the nonlinearity is modeled by an analytic (entire) function $\Phi$, and $\Phi^{\lozenge}$ denotes its corresponding Wick-version as it will be described in detail below. Examples of these equations involve stochastic versions of the  Allen--Cahn, Newell--Whitehead--Segel equations, and various other equations.

Let $X$ be a certain Banach algebra. Let $\Phi$ be an analytic function on $X$ with its Maclaurin expansion $\displaystyle\Phi(\xi)=\sum_{n\geq 0}a_n \xi^n,\;\xi\in X,$ where $a_n,\;n\geq 0$ are either constants or deterministic functions (i.e. elements of $X$). Here we implicitly assumed that $X$ is a Banach algebra of deterministic functions over some field of constants. We study stochastic nonlinear evolution equations of the form  
\begin{align}\label{PNLJ}
u_t (t, \omega)&= \mathbf A \, u(t, \omega) + \Phi^\lozenge \big(u (t, \omega)\big)+f(t,\omega), \quad t\in (0,T], \quad \omega\in\Omega,\nonumber\\
&= \mathbf A \, u(t, \omega) + \sum_{n=0}^\infty a_n u^{\lozenge n}(t, \omega)+f(t,\omega), \quad t\in (0,T], \quad \omega\in\Omega,\\\nonumber
u(0,\omega) &= u^0(\omega), \quad \omega\in\Omega,
\end{align} 
where $u(t,\omega)$, $t\in [0, T]$, $\omega\in \Omega$ is an $X-$valued generalized stochastic process and $\mathbf A$ corresponds to a densely defined infinitesimal  generator of a $C_0-$semigroup.  The nonlinear part $\Phi^\lozenge \big(u (t, \omega)\big)$ is given in terms of Wick-powers $u^{\lozenge n} = u^{\lozenge n-1} \lozenge u=u\lozenge \dots \lozenge u,\;n\in \mathbb{N}$, where $\lozenge$ denotes the Wick product, instead of a classical product considered in the definition of the function $\Phi$.  The Wick product is involved due to the fact that we allow random terms to be present both in the initial condition $u_0$ and the driving force $f$, hence the solution $u$ will inherit these random properties. This leads to singular solutions that do not allow to use ordinary multiplication, but require a renormalization of the multiplication, which is done by introducing the Wick product into the equation. The Wick product is known to represent the highest order stochastic approximation of the ordinary product due to the formula $u\cdot v = u\lozenge v + \sum_{n=1}^\infty\frac{{\mathbb D}^nu\lozenge {\mathbb D}^nv}{n!}$ \cite{Mikulevicius}, while some better approximations may also  be achieved \cite{Mikulevicius2} in the framework of Malliavin calculus by including higher order Malliavin derivatives ${\mathbb D}^n$, $n\in\mathbb N$, and additional terms of the form $\frac{{\mathbb D}^nu\lozenge {\mathbb D}^nv}{n!}$, $n=1,2,\ldots$  into the approximation. Although the Wick product of random variables as a modelling tool in SPDEs \cite{Wick} has received some critiques \cite{kritika}  for having non-intuitive properties such as factorizing expectations and forcing independence it has been used in many papers not just as a simplification tool, but as a necessity for dealing with products and nonlinear functions of distributions where the ordinary product would fail to be well defined.  Wick powers in stochastic equations and Wick-type renormalization is reviewed in \cite{DaPratoTubaro}. Several other theories allow for a formal treatment of nonlinearities of generalized stochastic processes such as Colombeau algebras of generalized stochastic processes \cite{Albeverio, Ober}, also combinations of the latter with Wick calculus \cite{Dora}, rough-paths theory \cite{rough} together with its extensions,  Hairer's celebrated model of regularity structures \cite{Hairer}. Several papers deal with a comparison of the Wick product and the ordinary product, e.g. in terms of an error estimate \cite{Mikulevicius2}, numerical simulations \cite{Theting}, as a scaling limit process \cite{Grothaus} etc. in those cases where the ordinary product is meaningful. Generally, the Wick product has shown to be a satisfactory approximation of the ordinary product from a modelling point of view for small noise levels \cite{Mikulevicius2} and for noise processes with a low correlation \cite{Wan}.

Various problems appearing  in quantum mechanics,  plasma physics, mathematical biology,  ecology, and medicine can be formulated in the form \eqref{PNLJ}. In particular, in stochastic Fisher--KPP equations,  Fujita-type equations, FitzHugh--Nagumo equations nonlinearities can be  generated by Wick-polynomial nonlinearities. In our previous paper \cite{Milica2} we proved existence and uniqueness of solutions of  nonlinear parabolic stochastic partial differential equations with Wick-power and Wick-polynomial type nonlinearities. 

Stochastic reaction-diffusion equation perturbed by multiplicative noise  with a polynomial nonlinearity of the form 
\begin{equation}
\Phi(s) =\sum_{i=1}^{2p-1} a_i \, s^i, \quad \text{with} \, \, a_{2p-1}< 0,
\label{special Phi}
\end{equation}
in a bounded domain, where  $p$ is a positive integer and $a_1, \dots, a_{2p-1}$ are real numbers  was considered  in \cite{Flandoli}.  Based on semigroups techniques, the author proved global existence and uniqueness of such problems.  Stochastic nonlinear equations of the form \eqref{PNLJ} include also the  problems  $u_t = \triangle u +  \Phi^{\lozenge}(u) + u \lozenge W_t$, with $\Phi^\lozenge$ being the Wick version of the  polynomial nonlinearity \eqref{special Phi}. 

Polynomial nonlinearities of the form \eqref{special Phi} appear in a special reaction-diffusion equation, the stochastic Allen--Cahn equation  (also called the stochastic Ginzburg--Landau equation),  $u_t = \triangle u +  u -   u^{\lozenge 3}$. The deterministic Allen-Cahn equation  is used to describe phase separation and the evolution of interfaces between the phases for systems without mass conservation \cite{Allen-Cahn1,Allen-Cahn2}, while the corresponding stochastic Allen--Cahn equations take into account also the effects due to the thermal fluctuations of the system \cite{Bertacco}. The stochastic Newell-Whitehead equation $u_t = \triangle u + a u -  b u^{\lozenge n}$, $a, b>0$, $n>1$,  is also a reaction-diffusion equation which  belongs to the class \eqref{PNLJ}.  The deterministic Newell--Whitehead--Segel  equation models the interaction of the effect of the diffusion term with the nonlinear effect of the reaction term. It has found various applications  in mechanical and chemical engineering, ecology, biology and bioengineering.  

Exponential type nonlinearities appear in the stochastic Fujita--Gelfand equation in combustion, i.e. the stochastic  exponential reaction-diffusion equation  of the form $u_t = \triangle u + \lambda e^{\lozenge u}$, $\lambda >0$. The corresponding deterministic  problems are used to model convection in a heat transfer process when exothermic chemical reactions occur in the fluid body \cite{Jones}.  Exponential type nonlinearity arises in the context of Liouville quantum gravity in models of the form $u_t=1/(4\pi) \Delta u +e^{\gamma u}+\xi $ with space-time white noise $\xi$ \cite{Garban}.

The nonlinear Schr\"odinger  equation with logarithmic nonlinearity possesses solitonlike solutions of a Gaussian shape, also called Gaussons,  in any number of dimensions. These solutions in nonlinear wave mechanics describe the propagation of wave packets of freely moving particles and were explained for example in \cite{log}.  

Existence and uniqueness of nonnegative solutions  of the heat equation with a logarithmic nonlinearity was proven in \cite{Alfaro}.  In the current paper, as an application of the general class of problems \eqref{PNLJ}, we solve stochastic heat equations with logarithmic nonlinearity of the form $u_t = \triangle u + u \lozenge \log^{\lozenge} |u| + f$. Other types of nonlinearities $\Phi$ which are analytic functions, such as trigonometric or hyperbolic functions also fit to our setting. Namely, the  nonlinear stochastic equation of the form $u_t = \triangle u + \cos^{\lozenge} u + \cosh^{\lozenge} u + f$ belongs to the considered class of stochastic equations \eqref{PNLJ}.

In \cite{Potthoff}, an equation with Wick-analytical nonlinearity was examined in a more generic form: $u_t = Au + \Phi^\lozenge(u)+u\lozenge N$ using an elliptic second order differential operator $A$. The $S-$transform and fixed point methods were used to derive the solution.  Our equation under consideration \eqref{PNLJ} uses the semigroup approach and its formulation is in an abstract Cauchy problem form; while prevalent applications primarily involve heat equation scenarios governed by second-order elliptic partial differential operators, our study and methodology extend to encompass a broader spectrum of abstract operators $\mathbf A$ that are generators of $C_0-$semigroups. This includes operators involving first-order spatial derivatives, convolution-type operators like integral transforms, as well as multiplication and translation operators in conjunction with the aforementioned ones. Notably, our study also accommodates the examination of delay equations within this framework.

We implement the Wiener-It\^o chaos expansion method from white noise analysis combined with $C_0-$ operator semigroup theory in order to prove the existence and the uniqueness of a solution to \eqref{PNLJ}. Using the chaos expansion method the initial stochastic partial differential equation (SPDE) is transformed into a lower triangular infinite system of PDEs (propagator system) that can be solved recursively. Solving this system, one obtains the coefficients of the solution to \eqref{PNLJ}. Finally, convergence of the series needs to be proven, and for this purpose we will define a new class of weighted Hida-type spaces and utilize some estimates for growth rates of a factorial function.

The propagator system method has been successfully implemented in \cite{Milica, Milica2, LPS, Mikulevicius, ps}. Our approach provides a way to solve nonlinear stochastic equations \eqref{PNLJ} explicitly. It is important to note that SPDEs with Wick-multiplication have been considered since the very beginning of the foundation of Hida's white noise analysis \cite{Hida} and continued in the papers of Holden, \O ksendal and their coworkers in \cite{HO1, HO2, HO3}, etc. as summed up in \cite{HOUZ}. Our approach differs in one important fact, namely in \cite{HOUZ} the solution is provided in terms of the Hermite transform and its inverse. Similarly as it is the case with the Laplace transform and other integral transforms, the most difficult step is always taking the inverse transform. The setup is comparable in many ways: convolutions are transformed into ordinary products by the Laplace transform, and the convolution-type Wick product is transformed into an ordinary product  by the Hermite transform, which also converts the SPDE into a deterministic PDE with classical multiplication operators. For the existence of the inverse Hermite transform the corresponding (deterministic) PDE needs to satisfy some very strict analytical properties. Sufficient conditions must always be proposed such that they allow the solution of the PDE to fulfill these analytic conditions and hence to be invertible by the Hermite transform which constitutes the main part of SPDE solving by this approach \cite{HOUZ}. Furthermore, one does not gain an explicit formula for the solution, only a theoretical result stating the existence and uniqueness of the solution expressed by a theoretical inverse Hermite transform. In our approach there is no need to apply the Hermite transform (or its counterpart the $S-$transform), one can just follow a recursive pattern to calculate the coefficients of the solution. This is an algorithmic procedure that provides an explicit/computable form of the solution and therefore it  can be  also used as an efficient method for  numerical implementations. Instead of dealing with sufficient conditions that allow the inverse Hermite transform to be applied, the key component in our approach is dealing with appropriate weight factors in the chaos expansion series and hence adequately tailored topologies in spaces of generalized stochastic processes that can accommodate the singularity level of the solution obtained by the recursive propagator method. Since one is not limited to classical complex analysis and analytic functions but rather has the freedom to enter the series expansion theory of ultradistributions and hyperfunctions and their respective weight convergence rates, this methodology enables a much larger playground and further generalizations to even more singular SPDEs.

In the existing literature, certain types of these equations were treated only numerically. For example, an efficient numerical approximation of nonlinear parabolic SPDEs with additive noise  was proposed in \cite{Jentzen}.  The authors obtained higher convergence order in comparison to convergence results of classical schemes such as the linear implicit Euler scheme. An optimal strong convergence rate of a fully discrete numerical scheme for the stochastic Allen--Cahn equation driven by an additive space-time white noise was established in \cite{LiuQiao}. An accurate and efficient 
Haar transform or Haar wavelet method was introduced in  \cite{Hariharan} for solving deterministic nonlinear parabolic partial differential equations such as for example  Allen--Cahn or  Newell--Whitehead--Segel equations. 

The paper is organized as follows: In the introductory section we recall upon basic notions of $C_0-$semi\-groups, evolution systems, entire functions defined on Banach spaces and white noise theory including chaos expansions of generalized stochastic processes. Here we will introduce a space of weighted Hida generalized random variables. In Section \ref{1.0}, which represents the main part of the paper, we prove existence and uniqueness of the solution to \eqref{PNLJ}. In Section \ref{2.0} we provide some concrete examples of nonlinear problems \eqref{PNLJ}.

\subsection{Generalized stochastic processes}\label{0.2}

Denote by $(\Omega, \mathcal{F}, \mu) $ the Gaussian white noise probability space $(S'(\mathbb{R}), \mathcal{B}, \mu), $ where $\Omega=S'(\mathbb{R})$ denotes the space of tempered distributions,  $\mathcal{B}$ is the Borel sigma-algebra generated by the weak topology on $S'(\mathbb{R})$ and $\mu$ the Gaussian white noise measure corresponding to  the characteristic function
\begin{equation*}\label{BM theorem}
\int_{S'(\mathbb{R})} \,  e^{{i\langle\omega, \phi\rangle}} d\mu(\omega) = \exp \left [-\frac{1}{2} \|\phi\|^2_{L^2(\mathbb{R})}\right], \quad \quad\phi\in  S(\mathbb{R}),
\end{equation*}
given by the Bochner-Minlos theorem.

We recall the notions related to   $L^2(\Omega,\mu)$, the space of square integrable random variables over the underlying probability space $(\Omega, \mathcal F, \mu)$, see \cite{HOUZ}.  The set of multiindices $\mathcal I$ is $(\mathbb N_0^\mathbb N)_c$, i.e. the set of sequences of non-negative integers which have only finitely many non-zero components. Especially, we denote by $\mathbf 0=(0,0,0,\ldots)$ the zero multiindex with all entries equal to zero, the length of a multiindex is $|\alpha|=\sum_{i=1}^\infty\alpha_i$ for $\alpha=(\alpha_1,\alpha_2,\ldots)\in\mathcal I$ and $\alpha!=\prod_{i=1}^\infty\alpha_i!.$ We will use the convention that $\alpha-\beta$ is defined if $\alpha_n-\beta_n\geq 0$ for all $n\in\mathbb N$, i.e., if $\alpha-\beta\geq\mathbf 0.$

The Wiener-It\^o theorem states that one can define an orthogonal basis $\{H_\alpha\}_{\alpha\in\mathcal I}$ of $L^2(\Omega,\mu)$, where $H_\alpha$ are constructed by  means of Hermite orthogonal polynomials $h_n$ and Hermite functions $\xi_n$,
\[
H_\alpha(\omega)=\prod_{n=1} ^\infty h_{\alpha_n}(\langle\omega,\xi_n\rangle),\quad \alpha=(\alpha_1,\alpha_2,\ldots, \alpha_n\ldots)\in\mathcal I,\quad \omega\in\Omega.
\]
Then, every $F\in L^2(\Omega,\mu)$ can be represented via the so called \emph{chaos expansion}
\[
F(\omega)=\sum_{\alpha\in\mathcal I} f_\alpha H_\alpha(\omega), \quad \omega\in S'(\mathbb{R}),\quad\sum_{\alpha\in\mathcal I} |f_\alpha|^2\alpha!<\infty,\quad f_\alpha\in\mathbb{R},\quad\alpha\in\mathcal I.
\]

Denote by $\varepsilon_k=(0,0,\ldots, 1, 0,0,\ldots),\;k\in \mathbb{N}$ the multiindex with the entry 1 at the $k$th place. Denote by $\mathcal H_1$ the subspace of $L^2(\Omega,\mu)$ spanned by the polynomials $H_{\varepsilon_k}(\cdot)$, $k\in\mathbb N$. The subspace $\mathcal H_1$ contains Gaussian stochastic processes, e.g. Brownian motion is given by the chaos expansion $B(t,\omega) = \sum_{k=1}^\infty \int_0^t \xi_k(s)ds\;H_{\varepsilon_k}(\omega).$

Changing the topology on $L^2(\Omega,\mu)$ to a weaker one, T. Hida \cite{Hida} defined spaces of generalized random variables containing the white noise as a weak derivative of the Brownian motion. We refer to \cite{Hida, HOUZ} for white noise analysis.

Let $(2\mathbb N)^{\alpha}=\prod_{n=1}^\infty (2n)^{\alpha_n},\quad \alpha=(\alpha_1,\alpha_2,\ldots, \alpha_n,\ldots)\in\mathcal I.$ We will often use the fact that the series $\sum_{\alpha\in\mathcal I}(2\mathbb N)^{-p\alpha}$ converges  for $p>1$. Using the same technique as in \cite[Chapter 2]{HOUZ} one can define Banach spaces $(FS)_{r,p}$ of test functions and their topological duals $(FS)_{-r,-p}$ of stochastic distributions for all $r\geq 2$ and $p\geq 0.$

\begin{definition} Let $r\geq 2,\;p\geq 0.$ We define  the stochastic test function space  by
\[
(FS)_{r,p} =\{F=\sum_{\alpha\in\mathcal I}f_\alpha {H_\alpha}\in L^2(\Omega,\mu):\;  \|F\|^2_{(FS)_{r,p}}= \sum_{\alpha\in\mathcal I} \alpha!|f_\alpha|^2r^{|\alpha|^3}!(2\mathbb N)^{p\alpha}<\infty\},
\]
and its topological dual, the stochastic distribution space,  by formal sums
\[
(FS)_{-r,-p} =\{F=\sum_{\alpha\in\mathcal I}f_\alpha {H_\alpha}:\;  \|F\|^2_{(FS)_{-r,-p}}= \sum_{\alpha\in\mathcal I}\alpha!|f_\alpha|^2(r^{|\alpha|^3}!)^{-1}(2\mathbb N)^{-p\alpha}<\infty\}.
\]
The space of test random variables is
\[
(FS) =\bigcap_{r\geq 2,\;p\geq 0}(FS)_{r,p}
\] 
endowed with the projective topology.  Its dual, the space of generalized random variables is 
\[
(FS)' =\bigcup_{r\geq 2,\;p\geq 0}(FS)_{-r,-p}
\] 
endowed with the inductive topology. 
\end{definition}

The action of  $F=\sum_{\alpha\in\mathcal{I}}  b_\alpha H_\alpha\in (FS)'$ onto $f=\sum_{\alpha\in\mathcal{I}}c_\alpha H_\alpha\in(FS)$ is given by $\langle F,f\rangle=\sum_{\alpha\in\mathcal{I}} \alpha! b_\alpha c_\alpha,$ where $b_\alpha, c_\alpha\in \mathbb R$. Thus, they form a Gel'fand triplet
\[
(FS) \subset L^2(\Omega,\mu) \subset (FS)'.
\]
Particularly, it also holds  $(FS)_{-r,-p} \subseteq (FS)_{-r,-q} $ for $0\leq p\leq q$, $r\geq 2$ and $(FS)_{-r_1,-p} \subseteq (FS)_{-r_2,-p}$ for $r_2\geq r_1\geq 2$, $p\geq0$. 

We will refer to $(FS)$ and $(FS)'$ as to factorial-weighted Hida spaces.

\begin{remark}Note that for $r=1$ one retrieves the classical Hida spaces \cite{Hida},  which are usually denoted as $(S)$ and $(S)^*$.

Other spaces that often occur in the literature are the Kondratiev test spaces $(S)_\rho$ that use the weight factors $\alpha!^{1+\rho} (2\mathbb N)^{p\alpha}$ and Kondratiev distribution spaces $(S)_{-\rho}$ that use the weight factors $\alpha!^{1-\rho} (2\mathbb N)^{-p\alpha}$, for $\rho\in[0,1]$ and construct projective and inductive limit topologies respectively \cite{HOUZ,Potthoff}, or the exponentially weighted spaces with weight factors $\alpha!^{1\pm\rho} Exp_k(\pm p(2\mathbb N)^{\alpha})$, for $\rho\in[0,1]$,  $k\in\mathbb N$, and $Exp_k(x)=\underbrace{\exp(\exp(\cdots (\exp x)\cdots))}_k$, denoted by $Exp_k(S)_{\pm\rho}$ as in \cite{GRPW,Habib}, as well as many other possible weight factors \cite{Sergey}.

Clearly,
$$Exp_k(S)_{\rho }\subseteq (FS)\subseteq (S)_{\rho}\subseteq (S)\subseteq L^2(\Omega,\mu)\subseteq (S)^* \subseteq(S)_{-\rho}\subseteq (FS)'\subseteq Exp_k(S)_{-\rho}.$$

The Wick-analytical nonlinearity in equation \eqref{special Phi} cannot be handled by the classical Hida-Kondratiev spaces of generalized random variables; on the other hand, the exponentially weighted spaces might prove to be excessively large. The topology of the $(FS)'$ space can control the ideal rate of convergence to manage the singularity level of the solution to \eqref{special Phi}.
\end{remark}

\subsubsection{Nuclearity of the space of test random variables $(FS)$ and its dual $(FS)'$}
\label{0.2.1}

The spaces $(FS)_{r,p}$ and $(FS)_{-r,-p},\;r\geq 2,\;p\geq 0$ are separable Hilbert spaces.  Moreover, $(FS)$ and $(FS)'$ are nuclear spaces as one can infer form the following lemma.

\begin{lemma} 
The space of the stochastic test functions $(FS)$ is nuclear. 
\end{lemma}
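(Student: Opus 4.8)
The plan is to realize $(FS)$ as a countably Hilbert space and then verify the standard Hilbert--Schmidt criterion for nuclearity of such spaces. First I would observe that the Hilbertian norms $\|\cdot\|_{(FS)_{r,p}}$ are monotone in both parameters --- so that $(FS)_{r,q}\subseteq(FS)_{r,p}$ for $q\geq p$ and $(FS)_{r_2,p}\subseteq(FS)_{r_1,p}$ for $r_2\geq r_1$, mirroring the inclusions recorded above for the dual scale. Consequently the countable subfamily indexed by integers $r\geq 2$ and $p\geq 0$ is cofinal and generates the same projective topology, so that $(FS)=\bigcap_{r\geq 2,\,p\geq 0}(FS)_{r,p}$ is the reduced projective limit of the separable Hilbert spaces $(FS)_{r,p}$; density of the finite linear combinations of the $H_\alpha$ in each $(FS)_{r,p}$ ensures the limit is reduced. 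Hence $(FS)$ is a countably Hilbert space and the abstract criterion applies.

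Next I would recall the criterion itself: a countably Hilbert space is nuclear as soon as for every index $(r,p)$ there exists a larger index $(r',p')$ for which the canonical embedding $(FS)_{r',p'}\hookrightarrow(FS)_{r,p}$ is Hilbert--Schmidt. This suffices because the composition of two Hilbert--Schmidt embeddings is trace-class, which is what nuclearity literally demands. The decisive structural feature here is that $\{H_\alpha\}_{\alpha\in\mathcal I}$ is an orthogonal basis of \emph{every} $(FS)_{r,p}$, with $\|H_\alpha\|^2_{(FS)_{r,p}}=\alpha!\,r^{|\alpha|^3}!\,(2\mathbb N)^{p\alpha}$, so that after normalization one obtains an explicit orthonormal basis and the embedding acts diagonally via $H_\alpha\mapsto H_\alpha$.

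I would then compute the Hilbert--Schmidt norm directly. Fixing $(r,p)$ and taking $r'=r$, $p'=p+2$, the diagonal structure gives, for the embedding $\iota\colon(FS)_{r,p+2}\hookrightarrow(FS)_{r,p}$,
\[
\|\iota\|^2_{HS}=\sum_{\alpha\in\mathcal I}\frac{\|H_\alpha\|^2_{(FS)_{r,p}}}{\|H_\alpha\|^2_{(FS)_{r,p+2}}}
=\sum_{\alpha\in\mathcal I}\frac{\alpha!\,r^{|\alpha|^3}!\,(2\mathbb N)^{p\alpha}}{\alpha!\,r^{|\alpha|^3}!\,(2\mathbb N)^{(p+2)\alpha}}
=\sum_{\alpha\in\mathcal I}(2\mathbb N)^{-2\alpha}<\infty,
\]
where finiteness is exactly the fact recalled above that $\sum_{\alpha\in\mathcal I}(2\mathbb N)^{-q\alpha}$ converges for $q>1$. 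Thus every such embedding is Hilbert--Schmidt, the criterion is satisfied, and $(FS)$ is nuclear.

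Notably the factorial weight $r^{|\alpha|^3}!$ cancels identically in the ratio, so the argument is robust and the only genuine analytic input is the convergence of the zeta-type series $\sum_\alpha(2\mathbb N)^{-q\alpha}$. For this reason I expect the main difficulty to be organizational rather than computational: one must be careful to confirm that the chosen countable family of Hilbert norms is genuinely cofinal and induces the same topology as the full projective limit over all real $r\geq 2$ and $p\geq 0$, so that the Hilbert--Schmidt criterion bears on $(FS)$ itself and not on a coarser topology. An alternative would be to invoke the corresponding result for the classical scale from \cite[Chapter 2]{HOUZ}, but the diagonal Hilbert--Schmidt computation above is the most self-contained route, and the same diagonal estimate simultaneously yields nuclearity of the dual $(FS)'$ as an inductive limit.
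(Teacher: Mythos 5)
Your proposal is correct and follows essentially the same route as the paper: both verify the Hilbert--Schmidt criterion for the diagonal embeddings between the Hilbert spaces $(FS)_{r',p'}\hookrightarrow(FS)_{r,p}$, reducing everything to the convergence of $\sum_{\alpha\in\mathcal I}(2\mathbb N)^{-q\alpha}$ for $q>1$. The only cosmetic difference is your choice $r'=r$, $p'=p+2$ (so the factorial weight cancels exactly), whereas the paper allows $s\geq r$, $q>p+1$ and bounds the ratio $r^{|\alpha|^3}!/s^{|\alpha|^3}!$ by $1$.
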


\begin{proof}
The space $(FS)_{r, p}$ is a Hilbert space with the inner product
\[
(f, g)_{r, p} = \sum_{\alpha\in \mathcal I} f_\alpha\, g_\alpha\,\alpha! \, \,  r^{|\alpha|^3}! \,\, \,  \, (2\mathbb N)^{p\alpha}
\]
for all $f=\sum_{\alpha\in \mathcal I} f_\alpha\, H_\alpha \in (FS)_{r, p}$ and $g=\sum_{\beta\in \mathcal I} g_\beta\, H_\beta \in (FS)_{r, p}$, $r\geq 2$, $p\geq 0$. Thus, the family of functions 
\[
K_{\alpha, r, p} =  \frac1{\sqrt{\alpha!\;r^{|\alpha|^3}}! } \, \cdot (2\mathbb N)^{- \frac{p\alpha}2} \, H_\alpha, \quad \alpha\in \mathcal I, 
\]
forms an orthonormal basis for  $(FS)_{r, p}$. By definition, $(FS)$ is the projective limit of $(FS)_{r, p}$.  If $s\geq r$ and $q> p+1$ then
\begin{align*}
\sum_{\alpha\in \mathcal I} \|K_{\alpha, s, q }\|^2_{(FS)_{r, p}} & = \sum_{\alpha\in \mathcal I} \Big(\frac1{\sqrt{\alpha!s^{|\alpha|^3}}! } \, \cdot (2\mathbb N)^{- \frac{q\alpha}2} \Big)^2 \, \, \cdot \, \|H_\alpha\|^2_{(FS)_{r, p}}\\
& = \sum_{\alpha\in \mathcal I}   \frac1{\alpha!\;s^{|\alpha|^3}! } \, \cdot (2\mathbb N)^{- q\alpha}  \, \cdot \,\alpha!\, r^{|\alpha|^3}! \,\, \,  \, (2\mathbb N)^{p\alpha}\\
& = \sum_{\alpha\in \mathcal I}    \frac{r^{|\alpha|^3}!}{s^{|\alpha|^3}! } \, \cdot (2\mathbb N)^{- (q-p)\alpha}  \\
& \leq \sum_{\alpha\in \mathcal I}     (2\mathbb N)^{- (q-p)\alpha}  < \infty.
\end{align*}
Therefore, the embedding $(FS)_{s, q} \subset (FS)_{r, p}$, $s>r$ and $q> p+1$    is Hilbert--Schmidt 
and we conclude that $(FS)$ is a nuclear space. 
\end{proof}

Since $(FS) = \bigcap_{r\geq 2, p\geq 0} (FS)_{r, p}$ is nuclear, then the inductive limit of the spaces $(FS)_{ -r, -p},$ namely, $(FS)' = \bigcup_{r\geq 2, p\geq 0} (FS)_{-r, -p}$  is nuclear too. We used the property that the  dual of a Fr\' echet space is nuclear if and only if the initial space is nuclear.

\subsubsection{Closedness of $(FS)'$ under the Wick product}
\label{0.2.3}

The \emph{Wick product} of the stochastic functions $f=\sum_{\alpha\in\mathcal I}f_\alpha H_\alpha$ and $g=\sum_{\beta\in\mathcal I}g_\beta H_\beta$ is given by  
\[
f\lozenge g = \sum_{\gamma\in\mathcal I}\sum_{\alpha+\beta=\gamma}f_\alpha g_\beta H_\gamma =\sum_{\alpha\in\mathcal I}\sum_{\beta\leq \alpha} f_\beta g_{\alpha-\beta} H_\alpha.
\] 
In the following theorem we prove that the weighted Hida space $(FS)'$ is closed under the Wick multiplication. 
\begin{theorem}\label{wickclose}
Let the stochastic functions $f\in (FS)_{-r, -p}$ and $g\in (FS)_{-r, -p}$, $r \geq 2$, $p \geq 0$, be given in their chaos expansion forms \[f=\sum_{\alpha\in\mathcal I}f_\alpha H_\alpha \qquad \text{and} \qquad  g=\sum_{\beta\in\mathcal I}g_\beta H_\beta.\] Then, the Wick product $f\lozenge g$ is a well defined element in $(FS)_{- s, -q}$  for $s \geq r \geq 2$ and $q>p$ (i.e. for $q \geq p -1+ k$, where $k>1$).
\end{theorem}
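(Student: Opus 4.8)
The plan is to verify directly that the coefficient sequence of $f\lozenge g$ has finite $(FS)_{-s,-q}$ norm, by reducing the whole estimate to a term-by-term comparison against the product $\|f\|^2_{(FS)_{-r,-p}}\,\|g\|^2_{(FS)_{-r,-p}}$. Writing $f\lozenge g=\sum_{\gamma\in\mathcal I}c_\gamma H_\gamma$ with $c_\gamma=\sum_{\alpha+\beta=\gamma}f_\alpha g_\beta$, I would first bound $|c_\gamma|^2$ by the Cauchy--Schwarz inequality applied to the finite index set $\{\alpha:\alpha\leq\gamma\}$, obtaining $|c_\gamma|^2\leq N(\gamma)\sum_{\alpha+\beta=\gamma}|f_\alpha|^2|g_\beta|^2$, where $N(\gamma)=\prod_i(\gamma_i+1)$ counts the admissible decompositions $\alpha+\beta=\gamma$. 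Substituting this into $\|f\lozenge g\|^2_{(FS)_{-s,-q}}=\sum_\gamma\gamma!\,|c_\gamma|^2(s^{|\gamma|^3}!)^{-1}(2\mathbb N)^{-q\gamma}$ and interchanging the order of summation (legitimate since all terms are nonnegative), the problem collapses to
\[
\|f\lozenge g\|^2_{(FS)_{-s,-q}}\leq\sum_{\alpha,\beta\in\mathcal I}|f_\alpha|^2|g_\beta|^2\,(\alpha+\beta)!\,N(\alpha+\beta)\,\big(s^{(|\alpha|+|\beta|)^3}!\big)^{-1}(2\mathbb N)^{-q(\alpha+\beta)}.
\]

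Next I would establish term-by-term domination, splitting the weight into three independent comparisons against the general term $\alpha!\,\beta!\,(r^{|\alpha|^3}!)^{-1}(r^{|\beta|^3}!)^{-1}(2\mathbb N)^{-p(\alpha+\beta)}$ of the product of the two norms. First, the multinomial bound $(\alpha+\beta)!/(\alpha!\,\beta!)=\binom{\alpha+\beta}{\alpha}\leq 2^{|\alpha|+|\beta|}$. Second, and crucially, the factorial-of-factorial weights satisfy $r^{|\alpha|^3}!\,r^{|\beta|^3}!\leq s^{(|\alpha|+|\beta|)^3}!$. Third, the collected growth factor $2^{|\alpha|+|\beta|}N(\alpha+\beta)$ must be absorbed by the surplus decay $(2\mathbb N)^{-(q-p)(\alpha+\beta)}$: writing $\gamma=\alpha+\beta$, the quantity $2^{|\gamma|}N(\gamma)(2\mathbb N)^{-(q-p)\gamma}=\prod_i(\gamma_i+1)\,2^{\gamma_i}(2i)^{-(q-p)\gamma_i}$ factorizes over coordinates, each factor is bounded uniformly in $\gamma_i$ and equals $1$ for all but finitely many $i$, so its supremum $C$ over $\gamma\in\mathcal I$ is finite exactly when $q-p>1$ --- this is the summability margin reflected by the condition $k>1$ in the statement. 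Combining the three bounds gives $\|f\lozenge g\|^2_{(FS)_{-s,-q}}\leq C\,\|f\|^2_{(FS)_{-r,-p}}\|g\|^2_{(FS)_{-r,-p}}<\infty$, so $f\lozenge g\in(FS)_{-s,-q}$.

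The heart of the argument, and the step I expect to be the main obstacle, is the second inequality $r^{|\alpha|^3}!\,r^{|\beta|^3}!\leq s^{(|\alpha|+|\beta|)^3}!$, since it must tame the very fast-growing factorial-of-exponential weight that is specific to these spaces and is absent in the standard Hida--Kondratiev setting. The mechanism is the superadditivity of the cube, $(|\alpha|+|\beta|)^3\geq|\alpha|^3+|\beta|^3$, combined with the elementary estimate $a!\,b!\leq(a+b)!$ and monotonicity in the base ($s\geq r\geq 2$): setting $a=r^{|\alpha|^3}$ and $b=r^{|\beta|^3}$ one has $a!\,b!\leq(a+b)!\leq(ab)!=\big(r^{|\alpha|^3+|\beta|^3}\big)!\leq\big(s^{(|\alpha|+|\beta|)^3}\big)!$, where the middle step uses $a+b\leq ab$, valid once $a,b\geq 2$. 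The only care needed is the degenerate cases $\alpha=\mathbf 0$ or $\beta=\mathbf 0$, in which $a$ or $b$ equals $1$ and the step $a+b\leq ab$ fails; there, however, one factorial factor is trivial and the claim reduces immediately to $r^{|\beta|^3}!\leq s^{|\beta|^3}!$ (respectively with $\alpha$). This is precisely where the otherwise exotic weight $r^{|\alpha|^3}!$ reveals itself to be tailored for closedness under the Wick product: the cube in the exponent supplies exactly the superadditive surplus that the multiplicative (convolution-type) structure of $\lozenge$ demands.
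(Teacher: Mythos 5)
Your chain of estimates is internally correct, and its engine is the same as the paper's: the factorial-weight inequality $r^{|\alpha|^3}!\,r^{|\beta|^3}!\le s^{(|\alpha|+|\beta|)^3}!$, which you prove exactly as the paper proves its inequality \eqref{fakt nejed} (via $a!\,b!\le(a+b)!\le(ab)!$ plus superadditivity of the cube, with the degenerate case $\alpha=\mathbf 0$ or $\beta=\mathbf 0$ treated separately, a case the paper glosses over), combined with Cauchy--Schwarz on the convolution $\sum_{\alpha+\beta=\gamma}f_\alpha g_\beta$ and absorption of the combinatorial growth into the surplus decay $(2\mathbb N)^{-(q-p)\gamma}$. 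Only the bookkeeping differs: you pull out the counting factor $N(\gamma)=\prod_i(\gamma_i+1)$ and use the sharp binomial bound $\binom{\alpha+\beta}{\alpha}\le 2^{|\alpha|+|\beta|}$, then bound a supremum; the paper distributes the weights inside the Cauchy--Schwarz step, uses the cruder estimate $(\alpha+\beta)!\le\alpha!\,\beta!\,(2\mathbb N)^{\alpha+\beta}$, and sums the series $\sum_\gamma(2\mathbb N)^{-k\gamma}$.

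The genuine issue is the parameter range. What you prove is $f\lozenge g\in(FS)_{-s,-q}$ for $q>p+1$, and your closing claim that the margin $q-p>1$ is ``exactly the condition $k>1$ in the statement'' is a misreading: the statement's condition $q\ge p-1+k$ with $k>1$ means $q>p$, so the range $p<q\le p+1$ is claimed by the theorem but not reached by your estimate. The loss comes from paying the factor $2^{|\gamma|}N(\gamma)\le 4^{|\gamma|}$ uniformly over all decompositions of $\gamma$. To recover every $q>p$ you would need to split cases: for decompositions with $\alpha,\beta\neq\mathbf 0$ one has $|\alpha|^3+|\beta|^3\le(|\gamma|-1)^3+1$, so the ratio $s^{|\gamma|^3}!\big/\big(r^{|\alpha|^3}!\,r^{|\beta|^3}!\big)$ dominates any factor $c^{|\gamma|}$ and no $(2\mathbb N)$-decay is needed at all; for decompositions with $\alpha=\mathbf 0$ or $\beta=\mathbf 0$ the binomial factor equals $1$ and $N(\gamma)(2\mathbb N)^{-(q-p)\gamma}$ is already bounded for every $q>p$. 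That said, you are not worse off than the paper itself: its proof distributes the factor $(2\mathbb N)^{-\frac{(p-1)\alpha}{2}+\frac{\alpha}{2}}$, so after squaring, the Cauchy--Schwarz terms carry $(2\mathbb N)^{(2-p)\alpha}$, and the next displayed line bounds these by $(2\mathbb N)^{-p\alpha}$ --- an inequality that is false (take $\alpha=\varepsilon_i$ with $i$ large: the weight $r^{|\alpha|^3}!$ depends only on $|\alpha|$ and cannot absorb $(2\mathbb N)^{2\alpha}$). Repaired, the paper's bookkeeping requires $q\ge p+1+k$ with $k>1$, i.e.\ $q>p+2$, so your sharper binomial bound actually does strictly better; both arguments do establish what is used downstream, namely that $(FS)'$ is closed under $\lozenge$ at the cost of enlarging the weight indices.
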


\begin{proof} 
First we note that for $s\geq 2$ and positive numbers $a$ and $b$ it holds that
\begin{equation}\label{fakt nejed}
s^{a^3}!\cdot s^{b^3}!\leq (s^{a^3}+s^{b^3})!\leq s^{a^3+b^3}!\leq s^{(a+b)^3} !  
\end{equation}
which then implies
\[
(s^{(a+b)^3} !)^{-1} \leq (s^{a^3}!)^{-1} \cdot  (s^{b^3}!)^{-1}. 
\]
Thus, in the proof we will use the inequality 
\[ 
(s^{(a+b)^3} !)^{-1} \leq (r^{a^3}!)^{-1} \cdot  (r^{b^3}!)^{-1}, \qquad \text{for} \quad r \leq s .
\] 
Also, we use the estimate
\[ 
(\alpha+\beta)!\leq \alpha!\beta!(2\mathbb N)^{\alpha+\beta},\quad \alpha,\beta\in\mathcal I.
\]
By the Cauchy--Schwartz inequality, for $q=p-1+k$ and $s\geq r \geq 2$ the following holds
\begingroup
\allowdisplaybreaks
\begin{align*}
&\|f\lozenge g\|^2_{(FS)_{-s, -q}}  = \\	
& =\sum_{\gamma\in\mathcal I} \Big\|\sum_{\alpha+\beta=\gamma}f_\alpha g_\beta \Big\|^2 \,\gamma!\,   (s^{|\gamma|^3}!)^{-1} \, (2\mathbb N)^{-q\gamma}\\
& =  \sum_{\gamma\in\mathcal I} \Big\|\sum_{\alpha+\beta=\gamma}f_\alpha g_\beta (\alpha+\beta)!^{\frac12} \Big\|^2 \,    (s^{|\gamma|^3}!)^{-1} \, (2\mathbb N)^{- (p -1+ k)\gamma}\\
& \leq   \sum_{\gamma\in\mathcal I} \Big\|\sum_{\alpha+\beta=\gamma}f_\alpha \,\alpha!^{\frac12} \, (s^{|\alpha|^3}!)^{-\frac 12} \, \, (2\mathbb N)^{- \frac{(p-1)\alpha}2+\frac\alpha2} \, g_\beta \,\beta!^{\frac12}  \, (s^{|\beta|^3}!)^{-\frac 12} \, \, (2\mathbb N)^{- \frac{(p-1) \beta}2+\frac\beta2}\Big\|^2 (2\mathbb N)^{- k\gamma}\\
& \leq  \sum_{\gamma\in\mathcal I} \Big(\sum_{\alpha+\beta=\gamma} \, |f_\alpha|^2 \, \alpha!\, (r^{|\alpha|^3}!)^{- 1} \, \, (2\mathbb N)^{- p\alpha} \Big)  \, \, \Big(\sum_{\alpha+\beta=\gamma} |g_\beta|^2 \, \beta!\, (r^{|\beta|^3}!)^{- 1} \, \, (2\mathbb N)^{- p \beta}\Big) \,    \, (2\mathbb N)^{- k\gamma}\\
& \leq  \, \Big(\sum_{\gamma\in\mathcal I}  (2\mathbb N)^{- k\gamma} \Big) \, \Big(\sum_{\alpha \in \mathcal I} \, |f_\alpha|^2 \,\alpha! \, (r^{|\alpha|^3}!)^{- 1} \, \, (2\mathbb N)^{- p \alpha} \Big)  \, \, \Big(\sum_{\beta\in \mathcal I} |g_\beta|^2 \,\beta! \, (r^{|\beta|^3!})^{- 1} \, \, (2\mathbb N)^{- p \beta}\Big) \,   \\
& \leq M \, \cdot \|f\|^2_{(FS)_{-r, -p}} \, \cdot \, \,  \|g\|^2_{(FS)_{-r, -p}} ,
\end{align*} 
\endgroup
because $M= \sum_{\gamma\in\mathcal I}  (2\mathbb N)^{- k\gamma} < \infty$ for $k>1$. 
\end{proof}

Since the space $(FS)'$ is closed under the Wick multiplication, then its $n$th Wick power is also a well defined element in $(FS)'$. Recall, the $n$th Wick power of $f$ is defined recursively by $f^{\lozenge n} = f\lozenge f^{\lozenge (n-1)}$, for $n\geq 1$ and $f^{\lozenge 0} = 1$. 

\begin{corollary}\label{wick power}
Let  $f\in (FS)_{-r, -p}$, $r\geq 2$, $p\geq 0$. Then its $n$th Wick power $f^{\lozenge n}$, $n\geq 1$ is an element of $(FS)_{-s, - q}$,  for $s\geq r$, $q>p+1$.
\end{corollary}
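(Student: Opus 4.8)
The plan is to argue by induction on $n$, exploiting the recursive definition $f^{\lozenge n}=f\lozenge f^{\lozenge(n-1)}$ together with Theorem \ref{wickclose} and the inclusions $(FS)_{-r,-p}\subseteq(FS)_{-r,-q}$ (for $p\leq q$) and $(FS)_{-r_1,-p}\subseteq(FS)_{-r_2,-p}$ (for $r_2\geq r_1$). Fix $s\geq r$ and $q>p+1$. For the base case $n=1$, these inclusions immediately give $f=f^{\lozenge 1}\in(FS)_{-r,-p}\subseteq(FS)_{-s,-q}$, since $s\geq r$ and $q\geq p$.

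The naive idea of invoking Theorem \ref{wickclose} once per Wick multiplication runs into the obstacle that every application strictly raises the second index: from two factors in $(FS)_{-\tilde r,-\tilde p}$ the theorem only places the product in $(FS)_{-\tilde s,-\tilde q}$ with $\tilde q>\tilde p$. With a fixed increment at each step the index would grow linearly in $n$ and overshoot the fixed target $q$. The observation that resolves this is that the increment in Theorem \ref{wickclose} may be taken to be \emph{any} positive number $\tilde q-\tilde p=k-1>0$, at the sole cost of enlarging the constant $M=\sum_{\gamma}(2\mathbb N)^{-k\gamma}$, which remains finite as long as $k>1$. Hence, for a given $n$, I would distribute the available budget $q-p>1$ evenly across the $n-1$ multiplications, choosing the per-step increment $\eta=\frac{q-p}{n}>0$.

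Concretely, I would build the chain inductively: having $f^{\lozenge(m-1)}\in(FS)_{-s,-(p+(m-2)\eta)}$, I place both factors of $f^{\lozenge m}=f\lozenge f^{\lozenge(m-1)}$ into the common space $(FS)_{-s,-(p+(m-2)\eta)}$, using $f\in(FS)_{-r,-p}\subseteq(FS)_{-s,-(p+(m-2)\eta)}$ on one side and the induction hypothesis on the other, and then apply Theorem \ref{wickclose} with increment $\eta$ to obtain $f^{\lozenge m}\in(FS)_{-s,-(p+(m-1)\eta)}$. After $n-1$ steps this lands $f^{\lozenge n}$ in $(FS)_{-s,-(p+(n-1)\eta)}$; since $(n-1)\eta=\frac{(n-1)(q-p)}{n}<q-p$, the final index satisfies $p+(n-1)\eta<q$, so $(FS)_{-s,-(p+(n-1)\eta)}\subseteq(FS)_{-s,-q}$ and the claim follows.

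The main obstacle is exactly this index bookkeeping, namely showing that a \emph{fixed} target $q>p+1$ is adequate for every $n$ rather than an $n$-dependent one; I expect it to be overcome precisely by the arbitrarily small positive increment permitted in Theorem \ref{wickclose}. The price paid is that the resulting norm $\|f^{\lozenge n}\|_{(FS)_{-s,-q}}$, which accumulates the product of the $n-1$ constants $M$ and the combinatorial factors coming from the estimate $(\alpha+\beta)!\leq\alpha!\beta!(2\mathbb N)^{\alpha+\beta}$, depends on $n$; this is immaterial for mere membership in the space.
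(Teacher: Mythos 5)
Your proof is correct and takes essentially the same approach as the paper: induct on $n$ via $f^{\lozenge m}=f\lozenge f^{\lozenge(m-1)}$, embed both factors into a common space $(FS)_{-s,-\tilde p}$, and exploit the fact that the per-multiplication increment in Theorem \ref{wickclose} may be chosen arbitrarily small depending on $n$ (the paper takes $k=1+\epsilon$ with $(n-1)\epsilon<1$, you split the budget as $\eta=(q-p)/n$; the bookkeeping is equivalent). Your variant in fact yields membership in $(FS)_{-s,-q}$ for every $q>p$, slightly sharper than the stated $q>p+1$, but this is a cosmetic refinement rather than a different method.
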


\begin{proof}
Applying Theorem \ref{wickclose} with $f=g$ we obtain that $f^{\lozenge 2}\in (FS)_{-s,-q}$ for $s\geq r$ and $q=p-1+k$, $k>1$. Now, using the embedding $f\in (FS)_{-r,-p}\subset(FS)_{-s,-p-1+k}$ we obtain $f,f^{\lozenge 2} \in (FS)_{-s,-p-1+k}$ and hence may apply again Theorem \ref{wickclose} to obtain $f^{\lozenge 3}=f\lozenge f^{\lozenge 2}\in (FS)_{-s_1,-q_1}$ for $s_1\geq s$ and $q_1\geq q-1+k_1 = p-2+k+k_1$, where $k_1>1$. Without loss of generality we may let $k_1=k$ and write $f^{\lozenge 3}\in(FS)_{-s,-(p-2+2k)}$. By induction one may easily prove that $f^{\lozenge n}\in(FS)_{-s,-p-(n-1)(k-1)}$. Since $k>1$, we may choose $k=1+\epsilon$, $\epsilon>0$, small enough so that $(n-1)(k-1)<1$, hence $f^{\lozenge n}\in(FS)_{-s,-q}$, for $q>p+1$.
\end{proof}

\subsubsection{Stochastic processes}
\label{0.2.2}

In the same way as it was done in \cite{GRPW} and \cite{ps},  we extend the definition of stochastic processes to processes with the chaos expansion form
\begin{equation}
u(t,\omega)=\sum_{\alpha\in\mathcal I}u_\alpha(t) {H_\alpha}(\omega), 
\label{u ch exp}
\end{equation} 
where the coefficients $u_\alpha$ are elements of the Banach space of functions $C^k[0,T], $ $k\in\mathbb N$. 

\begin{definition}\label{def gen pr}
We say that $u$ given in the chaos expansion form \eqref{u ch exp} is a \emph{generalized stochastic process} in $ C^k[0,T]\otimes (FS)'$ if there exist $r\geq 2$ and $p\geq 0$ such that 
\[
\|u\|_{C^k[0,T]\otimes(FS)'}^2=\sum_{\alpha\in\mathcal I}\alpha!(r^{|\alpha|^3}!)^{-1}\|u_\alpha\|_{C^k[0,T]}^2(2\mathbb N)^{-p\alpha}<\infty .
\]
\end{definition}

The differentiation of a stochastic process can be carried out componentwise in the chaos expansion, i.e. due to the fact that $(FS)'$ is a nuclear space it holds that $C^k([0,T],(FS)')=C^k[0,T]\hat\otimes(FS)'$ where $\hat{\otimes}$ denotes the completion of the tensor product which is the same for the $\varepsilon-$completion  and $\pi-$completion. In the sequel, we will use the notation $\otimes$ instead of $\hat\otimes$.  Hence $C^k[0,T]\otimes (FS)_{-r,-p}$ and $C^k[0,T]\otimes (FS)_{r,p}$ denote subspaces of the corresponding completions. This means that a stochastic process $u(t,\omega)$ is $k$ times continuously differentiable if and only if all of its coefficients $u_\alpha$, $\alpha\in\mathcal I$ are in $C^k[0,T]$.

We keep the same notation when $C^k[0,T]$ is replaced by another Banach space.  The same holds for Banach space valued stochastic processes,  i.e.  elements of  $C^k([0,T],X)\otimes(FS)'$, where $X$ is an arbitrary Banach space. It holds that
\begin{equation}
C^k([0,T],X\otimes (FS)')=C^k([0,T],X)\otimes(FS)'=\bigcup_{r\geq 2, p\geq 0}C^k([0,T],X)\otimes (FS)_{-r,-p}, \quad k\in \mathbb N. 
\label{C^k proc}
\end{equation}

\subsection{Evolution systems}\label{0.1}

We fix the notation and recall some known facts about evolution systems (see \cite[Chapter 5]{Pazy}). Let $X$ be a Banach space. Let $\{A(t)\}_{t\in[s,T]}$ be a family of linear operators in $X$  such that $A(t):D(A(t))\subset X\to X,$ $t\in[s,T]$.  Further, let $f$ be an $X-$valued function $f:[s,T]\to X.$  Consider the initial value problem 
\begin{align}\label{EP}
\frac{d}{dt}u(t)&=A(t)u(t)+f(t),\quad 0\leq s<t\leq T,\\
u(s)&=x.\nonumber
\end{align}
If $u\in C([s,T],X)\cap C^1((s,T],X),$ $u(t)\in D(A(t))$ for all $t\in (s,T]$ and $u$ satisfies \eqref{EP}, then $u$ is a classical solution to \eqref{EP}.

A two parameter family of bounded linear operators $S(t,s),$ $0\leq s\leq t\leq T$ on X is called an evolution system if the following two conditions are satisfied:
\begin{enumerate}
\item $S(s,s)=I$ and $S(t,r)S(r,s)=S(t,s),\quad 0\leq s\leq r\leq t\leq T$; 
\item $(t,s)\mapsto S(t,s)$ is strongly continuous for all $0\leq s\leq t\leq T.$
\end{enumerate}
Clearly, if $S(t,s)$ is an evolution system associated with the homogeneous evolution problem \eqref{EP}, i.e. if $f\equiv 0,$ then a classical solution to \eqref{EP} is given by $ u(t)=S(t,s)x,$ $t\in[s,T]. $

A family $\{A(t)\}_{t\in[s,T]}$ of infinitesimal generators of $C_0-$semigroups on $X$ is called stable if there exist constants $m\geq 1$ and $w\in\mathbb{R}$ (stability constants) such that $(w,\infty)\subseteq \rho(A(t)),$ $t\in[s,T]$ and
\[
\Big\|\prod_{j=1}^kR(\lambda:A(t_j))\Big\|\leq\frac{m}{(\lambda-w)^k},\quad \lambda>w,
\]
for every finite sequence $0\leq s\leq t_1\leq t_2\leq\dots\leq t_k\leq T,\;k=1,2,\dots.$

Let $\{A(t)\}_{t\in [s,T]}$ be a stable family of infinitesimal generators with stability constants $m$ and $w$. Let $B(t),$ $t\in[s,T],$ be a family of bounded linear operators on $X$. If $\|B(t)\|\leq M,$ $t\in [s, T],$ then $\{A(t)+B(t)\}_{t\in[s,T]}$ is a stable family of infinitesimal generators with stability constants $m$ and $w+Mm.$

Let $\{A(t)\}_{t\in [s,T]}$ be a stable family of infinitesimal generators of $C_0-$semigroups on $X$ such that the domain  $D(A(t))=D$ is independent of $t$ and for every $x\in D,$ $A(t)x$ is continuously differentiable in $X.$ If $f\in C^1([s,T],X)$ then for every $x\in D$ the evolution problem \eqref{EP} has a unique classical solution $u$ given by
\[
u(t)=S(t,s)x+\int_s^t S(t,r)f(r)dr,\quad 0\leq s\leq t\leq T.
\]
From the proof of \cite[Theorem 5.3, p. 147]{Pazy} one can obtain
\begin{align*}
\frac{d}{dt}u(t)= A(t)S(t,s)x+A(t)\int_s^t S(t,r)f(r)dr+f(t),\quad s<t\leq T.
\end{align*}
Since $t\mapsto A(t)$ is continuous in $B(D,X)$ and $(t,s)\mapsto S(t,s)$ is strongly continuous for all $0\leq s\leq t\leq T,$ we have additionally that the solution $u$ to \eqref{EP} exhibits the regularity property $u\in C^1([s,T],X)$ and $\frac{d}{dt}u(t)|_{t=s}=A(s)x+f(s).$
Recall that the evolution system $S(t,s)$ satisfies:
\begin{enumerate}
\item $\|S(t,s)\|\leq me^{w(t-s)},\ 0\leq s\leq t\leq T;$
\item $S(t,s)D\subseteq D;$
\item $S(t,s)x$ is continuous in $D$ for all $0\leq s\leq t\leq T$ and $x\in D.$
\end{enumerate}

\begin{remark}
Considering infinitesimal generators depending on $t,$ we follow the standard  approach of Yosida (cf. \cite{51}, \cite{23}). We refer to \cite{NZ} for a method based on an equivalent  operator extension problem (see also references in \cite{NZ}). The chaos expansion approach, which is the essence of our paper, requires the existence results for the propagator system i.e. for the coordinate-wise deterministic Cauchy problems. For this purpose we demonstrate the applications of the hyperbolic Cauchy problem given in \cite{Pazy}.
\end{remark}

\subsection{Entire functions on Banach algebras}\label{0.3}

In this subsection we fix some notation and provide some basic properties of entire functions on Banach spaces that will be used in the sequel (see \cite{blum}).

Let $X$ be a Banach algebra with the uniform topology. Continuity of a function $\Phi,$ defined on a region $Y\subset X$ with values in $X,$ is defined as usual. 

\begin{definition}
A function $\Phi$ has a derivative $\Phi'(u_\mathbf{0})$ at $u_\mathbf{0}\in X$ if for every $\varepsilon>0$ a $\delta>0$ can be found such that for all $u\in Y$ with $\|u-u_\mathbf{0}\|_X<\delta,$
\[
\left\|\Phi(u)-\Phi(u_\mathbf{0})-(u-u_\mathbf{0})\Phi'(u_\mathbf{0})\right\|_X< \varepsilon \|u-u_\mathbf{0}\|_X.
\]
If $\Phi$ has a derivative throughout a neighbourhood of $u_\mathbf{0}$ then $\Phi$ is said to be analytic at $u_\mathbf{0}.$ If $\Phi$ is analytic in the whole space $X$ then it is said to be entire.
\end{definition}

As presented in \cite{blum}, if $\Phi$ is analytic at $u_\mathbf{0}\in Y$ then one can establish its Taylor expansion, i.e. there exists an infinite sequence $a_n\in X,\;n\in\mathbb{N}$ such that
\[
\Phi(u)=\sum_{n=0}^\infty a_n (u-u_\mathbf{0})^n,\quad u\in Y,\;\|u-u_\mathbf{0}\|_X<R,
\]
where $R$ is the ratio of convergence and $a_n=\frac{\Phi^{(n)}(u_\mathbf{0})}{n!}.$ The ratio of convergence is given by $1/R=\lim_{n\to \infty}\sqrt[n]{\|a_n\|_X}.$ Hence, if $\Phi$ is an entire function on $X$ then $\lim_{n\to \infty}\sqrt[n]{\|a_n\|_X}=0$ and one obtains
\[
\Phi(u)=\sum_{n=0}^\infty a_n u^n \qquad\mbox{and}\qquad \Phi^{(k)}(u)=\sum_{n=0}^\infty a_{n+k} u^n,\quad u\in X,\;k\in \mathbb{N}.
\]

In the sequel, we will use the notation $\varphi$ for the function obtained from the entire function $\Phi$ as follows
\[
\varphi(\xi):=\sum_{n=0}^\infty \|a_n\|_X \; \xi^n,\quad \xi\in \mathbb{R}.
\]
Since $\lim_{n\to \infty}\sqrt[n]{\|a_n\|_X}=0,$ the function $\varphi$ is also analytic on the whole $\mathbb{R}.$ Recall, analyticity of the function $\varphi$ implies that for every compact set $K\subset \mathbb{R}$ there exists a constant $C>0$ such that for every $k\in \mathbb{N}$ the following bound holds 
\begin{equation}\label{const C}
\sup_{\xi \in K}|\varphi^{(k)}(\xi)|\leq C^k k!.
\end{equation}

Note here that if $u_\mathbf{0}(t),$ $t\in[0,T]$ is an $X-$valued function of a certain regularity with respect to $t$, e.g. if $u_\mathbf{0}\in C^1([0,T],X),$ then by the chain rule theorem the function $\Phi(u_\mathbf{0}(t))=\sum_{n=0}^\infty a_n u^n_\mathbf{0}(t),$ $t\in [0,T]$ is of the same regularity.

In what follows, we use the notation $\Phi^{\lozenge}$ for a function defined on the $X-$valued weighted Hida spaces $X\otimes (FS)',$ generated from the entire function $\Phi$ on $X,$ in the following way
\[
\Phi^{\lozenge}(u)=\sum_{n=0}^\infty a_n u^{\lozenge n},\quad u\in X\otimes (FS)',
\]
where $a_n\in X,\;n\in \mathbb{N}$ and $\lim_{n\to \infty}\sqrt[n]{\|a_n\|_X}=0.$ Since the classical product of generalized random variables is not well defined in $X\otimes (FS)',$ here we interchange the classical product by the Wick product defined in the previous Subsection \ref{0.2}. From Corollary \ref{wick power} one can infer that if $u\in X\otimes (FS)'$ then $\Phi^{\lozenge}(u),$ expressed in this way, is a well defined element of $X\otimes (FS)'.$

\section{Stochastic nonlinear evolution equations}
\label{1.0}

We consider the equation \eqref{PNLJ}, i.e. the equation of the form:

\begin{align}\label{NLJ}
u_t (t, \omega)&= \mathbf A \, u(t, \omega) + \sum_{n=0}^\infty a_n u^{\lozenge n}(t, \omega)+f(t,\omega), \quad t\in (0,T], \quad \omega\in \Omega\\\nonumber
u(0,\omega) &= u^0(\omega), \quad \omega\in\Omega.
\end{align} 
We are looking for a solution to \eqref{NLJ} as an $X$-valued stochastic process $u(t)\in X\otimes (FS)',\;t\in[0,T]$ represented in the form 
\begin{equation}\label{proces}
 u(t,\omega)= \sum_{\alpha\in \mathcal I} u_\alpha(t) \,\,  H_\alpha(\omega),\quad t\in[0,T],\quad \omega\in \Omega.
 \end{equation}
Let $\mathbf A: \mathbb{D}\subset X\otimes (FS)' \to X\otimes (FS)'$ be a coordinatewise operator that corresponds to a family of deterministic operators $A_\alpha: \, D_\alpha\subset X \to X$, $\alpha\in \mathcal{I}$ 
 \[
\mathbf{A}\, u(t,\omega)=\mathbf{A}\left(\sum_{\alpha\in \mathcal I} u_\alpha(t) \, H_\alpha(\omega)\right)=\sum_{\alpha\in \mathcal I} A_\alpha u_\alpha(t) \,\,  H_\alpha(\omega),\quad u\in \mathbb{D},
\]
(see \cite[Section 2]{Milica}).  The chaos expansion representation of the Wick-square is given by
\begin{align*}
u^{\lozenge 2}(t,\omega)&= \sum_{\alpha \in \mathcal I} \Big( \sum_{\gamma\leq \alpha } \, u_\gamma(t) \,\, u_{\alpha-\gamma} (t)\Big) \, H_\alpha(\omega)\\\nonumber
&=u^2_{\mathbf{0}}(t)\,H_{\mathbf{0}}(\omega)+\sum_{|\alpha|>0} \Big(2u_{\mathbf{0}}(t)\,u_\alpha(t)+ \sum_{0<\gamma< \alpha } \, u_\gamma(t) \,\, u_{\alpha-\gamma} (t)\Big) \, H_\alpha(\omega),
\end{align*}
where $t\in[0,T],$ $\omega\in\Omega.$ Let $u^{\lozenge m}_\alpha(t)$, $\alpha \in \mathcal I$, $m\in \mathbb N$ denote the coefficients of the chaos expansion of the $m$th Wick power, i.e.  $u^{\lozenge m}(t,\omega) = \sum_{\alpha\in \mathcal I} u^{\lozenge m}_\alpha (t) H_\alpha (\omega)$, for $m\in \mathbb N$. Then, for arbitrary $n\in \mathbb{N}$, one can infer  that the $n$th Wick-power is given by
\begin{align}\label{WPn}
&  u^{\lozenge n}(t,\omega)=u^{\lozenge n-1}(t,\omega)\lozenge u(t,\omega) =\sum_{\alpha \in \mathcal I} \Big( \sum_{\gamma\leq \alpha } \, u^{\lozenge n-1}_\gamma(t) \,\, u_{\alpha-\gamma} (t)\Big) \, H_\alpha(\omega)
\\\nonumber
&=u^n_{\mathbf{0}}(t)\,H_{\mathbf{0}}(\omega)+\sum_{|\alpha|>0} \Bigg(\binom{n}{1}u_{\mathbf{0}}^{n-1}(t)\,u_\alpha(t)+\binom{n}{2}u_{\mathbf{0}}^{n-2}(t) \sum_{0<\gamma_1< \alpha } \, u_{\alpha-\gamma_1}(t) \,\, u_{\gamma_1} (t)\\\nonumber
&+ \binom{n}{3}u_{\mathbf{0}}^{n-3} (t)\sum_{0<\gamma_1< \alpha } \sum_{0<\gamma_2< \gamma_1 }\, u_{\alpha-\gamma_1}(t) \,\, u_{\gamma_1-\gamma_2} (t) u_{\gamma_2}(t)+\dots+\\\nonumber
&+ \binom{n}{n}\sum_{0<\gamma_1< \alpha } \sum_{0<\gamma_2< \gamma_1 }\dots \sum_{0<\gamma_{n-1}< \gamma_{n-2} }u_{\alpha-\gamma_1}(t) \, u_{\gamma_1-\gamma_2} (t)\dots  u_{\gamma_{n-2}-\gamma_{n-1}}(t)u_{\gamma_{n-1}}(t)\Bigg) H_\alpha(\omega),
\end{align}
where $t\in[0,T],$ $\omega\in\Omega.$ Moreover, we recall that the Wick power $u^{\lozenge n}$ of a stochastic process $u\in X\otimes (FS)_{-r, -p}$ is an element of $X\otimes (FS)_{-r, -q}$, for $q> p+1$, see Corollary \ref{wick power}.

Now, the nonlinear part of \eqref{NLJ}  is of the form
\begin{align*}
\sum_{n=0}^\infty & a_n u^{\lozenge n}(t,\omega)  = a_0 + a_1 u(t,\omega) + a_2 u^{\lozenge 2}(t,\omega) + a_3 u^{\lozenge 3}(t,\omega) + 
\cdots + a_n u^{\lozenge n}(t,\omega) + \cdots
\end{align*}
which, after using \eqref{WPn}, becomes 
\begingroup
\allowdisplaybreaks
\begin{align*}
& = a_0 + a_1 \Big[u_\mathbf{0}(t)\,H_\mathbf{0}(\omega) +\sum_{|\alpha|>0}u_\alpha(t)\,H_\alpha (\omega)\Big]\\
& + a_2\, \Big[ u^2_{\mathbf{0}}(t)\,H_{\mathbf{0}}(\omega)+\sum_{|\alpha|>0} \Big(2 u_{\mathbf{0}}(t)\,u_\alpha(t)+ \sum_{0<\gamma< \alpha } \, u_\gamma(t) \, u_{\alpha-\gamma} (t)\Big) \, H_\alpha(\omega)\Big]\\
& + a_3 \, \Bigg[ u^3_{\mathbf{0}}(t)\,H_{\mathbf{0}}(\omega)+\sum_{|\alpha|>0} \Bigg(3 u_{\mathbf{0}}^{2}(t)\,u_\alpha(t)+ 3 u_{\mathbf{0}}(t) \sum_{0<\gamma_1< \alpha } \, u_{\alpha-\gamma_1}(t) \, u_{\gamma_1} (t)\\
&\hspace{4cm} + \sum_{0<\gamma_1< \alpha } \sum_{0<\gamma_2< \gamma_1 }\, u_{\alpha-\gamma_1}(t) \,u_{\gamma_1-\gamma_2} (t) u_{\gamma_2}(t)\Bigg) \, H_\alpha(\omega)  \Bigg]\\
& + a_4 \,\Bigg[u^4_{\mathbf{0}}(t)\,H_{\mathbf{0}}(\omega)+\sum_{|\alpha|>0} \Bigg( 4 u_{\mathbf{0}}^{3}(t)\,u_\alpha(t)+ 6 u_{\mathbf{0}}^{2}(t) \sum_{0<\gamma_1< \alpha } \, u_{\alpha-\gamma_1}(t) \, u_{\gamma_1} (t)\\
&\hspace{3.3cm} +  4 u_{\mathbf{0}}(t) \sum_{0<\gamma_1< \alpha } \sum_{0<\gamma_2< \gamma_1 }\, u_{\alpha-\gamma_1}(t) \, u_{\gamma_1-\gamma_2} (t) u_{\gamma_2}(t)\\
&\hspace{3.3cm} + \sum_{0<\gamma_1< \alpha } \sum_{0<\gamma_2< \gamma_1 }\sum_{0<\gamma_{3}< \gamma_{2} }\, u_{\alpha-\gamma_1}(t) \, u_{\gamma_1-\gamma_2} (t) u_{\gamma_{2}-\gamma_{3}}(t)u_{\gamma_{3}}(t)\Bigg) \, H_\alpha(\omega) \Bigg] \\
& + \cdots +\\
& + a_n \, \Bigg[ u^n_{\mathbf{0}}(t)\,H_{\mathbf{0}}(\omega)+\sum_{|\alpha|>0} \Bigg(\binom{n}{1}u_{\mathbf{0}}^{n-1}(t)\,u_\alpha(t)+\binom{n}{2}u_{\mathbf{0}}^{n-2} (t) \sum_{0<\gamma_1< \alpha } \, u_{\alpha-\gamma_1}(t) \, u_{\gamma_1} (t)\\\nonumber
& \hspace{4cm} + \binom{n}{3}u_{\mathbf{0}}^{n-3}(t) \sum_{0<\gamma_1< \alpha } \sum_{0<\gamma_2< \gamma_1 }\, u_{\alpha-\gamma_1}(t) \, u_{\gamma_1-\gamma_2} (t) u_{\gamma_2}(t)+\dots+\\
& \hspace{4cm} + \binom{n}{n}\sum_{0<\gamma_1< \alpha } \sum_{0<\gamma_2< \gamma_1 }\dots \sum_{0<\gamma_{n-1}< \gamma_{n-2} }\,\\
& \hspace{5.8cm}  u_{\alpha-\gamma_1}(t) \, u_{\gamma_1-\gamma_2} (t)\dots  u_{\gamma_{n-2}-\gamma_{n-1}}(t)u_{\gamma_{n-1}}(t)\Bigg) \, H_\alpha(\omega) \Bigg] \\
& + \cdots\\
& = \Phi (u_\mathbf{0}(t))\,H_\mathbf{0}(\omega) + \sum_{|\alpha|>0} \Bigg(\Phi'(u_\mathbf{0}(t))\,u_\alpha (t) + \frac{\Phi''(u_\mathbf{0}(t))}{2!} \sum_{0<\gamma_1< \alpha } \, u_{\alpha-\gamma_1}(t) \, u_{\gamma_1} (t) \\
& \hspace{4cm}  + \frac{\Phi^{(3)}(u_\mathbf{0}(t))}{3!}  \sum_{0<\gamma_1< \alpha } \sum_{0<\gamma_2< \gamma_1 }\, u_{\alpha-\gamma_1}(t) \, u_{\gamma_1-\gamma_2} (t) u_{\gamma_2}(t) + \cdots\\
& \hspace{4cm}  + \frac{\Phi^{(|\alpha|)}(u_\mathbf{0}(t))}{|\alpha|!} \sum_{0<\gamma_1< \alpha } \sum_{0<\gamma_2< \gamma_1 }\dots \sum_{0<\gamma_{|\alpha|-1}< \gamma_{|\alpha|-2} }\,\\
& \hspace{5.5cm}  u_{\alpha-\gamma_1}(t) u_{\gamma_1-\gamma_2} (t)\dots  u_{\gamma_{|\alpha|-2}-\gamma_{|\alpha|-1}}(t)u_{\gamma_{|\alpha|-1}}(t)\Bigg)H_\alpha (\omega).
\end{align*}
\endgroup
Finally,
\[
\sum_{n=0}^\infty a_n u^{\lozenge n}(t,\omega)  = \Phi \big(u_\mathbf{0}(t)\big)\,H_{\mathbf{0}}(\omega)+\sum_{|\alpha|>0} \Bigg(\Phi'\big(u_\mathbf{0}(t)\big)\,u_\alpha(t)+r_{\alpha,\Phi} (t)\Bigg) \, H_\alpha(\omega)
\]
where $t\in[0,T],$ $\omega\in\Omega.$ For later use, let us emphasize here that for $\alpha\in \mathcal{I},$ $|\alpha|=1$ the functions $r_{\alpha,\Phi}(t)=0,$ $t\in [0,T]$ and for $\alpha\in\mathcal{I},$ $|\alpha|\geq 2$ the functions $r_{\alpha,\Phi}$ are of the form
\begin{align}\label{ostatak}
r_{\alpha,\Phi}(t)=\sum_{k=2}^{|\alpha|}\frac{\Phi^{(k)}(u_{\mathbf{0}}(t))}{k!}\sum_{\mathbf{0}<\gamma_1<\alpha}  \sum_{\mathbf{0}<\gamma_2<\gamma_1} &\dots \sum_{\mathbf{0}<\gamma_{k-1}<\gamma_{k-2}}\\\nonumber
& u_{\alpha-\gamma_1}(t)u_{\gamma_1-\gamma_2}(t)\dots u_{\gamma_{k-2}-\gamma_{k-1}}(t) u_{\gamma_{k-1}}(t),
\end{align} 
$t\in[0,T],$ so they contain only the coordinate functions $u_\beta,$ $\beta< \alpha.$

We rewrite all processes that figure in \eqref{NLJ} in their corresponding  Wiener-It\^o chaos expansion form and obtain 
\[
\begin{split}
\sum_{\alpha \in \mathcal I} \, \frac{d}{dt}u_\alpha(t) \, H_\alpha(\omega) &= \Big(A_\mathbf{0}u_\mathbf{0}(t)+ \Phi \big(u_\mathbf{0}(t)\big)+ f_\mathbf{0}(t)\Big)\,H_\mathbf{0}(\omega) \\
&+ \sum_{|\alpha|>0}\Bigg( A_\alpha\,u_\alpha(t) + \Phi'\big(u_\mathbf{0}(t)\big)\,u_\alpha(t)+r_{\alpha,\Phi} (t) + f_\alpha(t)\Bigg) \, H_\alpha(\omega) \\
\sum_{\alpha \in \mathcal I} u_\alpha(0) \ H_\alpha(\omega) & = \sum_{\alpha \in \mathcal I} u^0_\alpha \ H_\alpha(\omega).
\end{split}
\] 
Due to the orthogonality of the basis $\{H_\alpha\}_{\alpha\in \mathcal I}$ this reduces to the system of infinitely many deterministic Cauchy problems: 
\begin{enumerate}
\item[$1^\circ$] for $\alpha =\mathbf{0}$ 
\begin{equation}\label{nelinearna det}
\frac{d}{dt} u_{\mathbf{0}} (t) =  A_{\mathbf{0}} u_{\mathbf{0}}  (t) +   \Phi \big(u_\mathbf{0}(t)\big) +f_\mathbf{0}(t), \quad u_{\mathbf{0}}(0) = u_{\mathbf{0}}^0,  \qquad \text{and}
\end{equation}
\item[$2^\circ$] for  $\alpha >\mathbf{0}$
\begin{equation}\label{sistem 2}
\frac{d}{dt}u_\alpha (t)=  \big( A_\alpha +  \Phi'\big(u_\mathbf{0}(t)\big) \,Id \big) \, u_\alpha(t)  +  r_{\alpha,\Phi} (t) + f_\alpha(t), \quad  u_\alpha (0) =  u_\alpha^0 ,
\end{equation} 
with $t\in (0,T]$ and $\omega\in\Omega$. 
\end{enumerate}
Let
\[
B_{\alpha,\Phi}(t) = A_\alpha +  \Phi'\big(u_\mathbf{0}(t)\big)\, Id \qquad  \text{and} \qquad g_{\alpha,\Phi} (t)= r_{\alpha,\Phi} (t) + f_\alpha(t), \quad t\in(0,T]
\] 
for all $\alpha > \mathbf{0}$. Then, the system \eqref{sistem 2} can be written in the form 
\begin{equation}\label{sistem 3}
\frac{d}{dt}u_\alpha (t) =  B_{\alpha,\Phi}(t)  \, u_\alpha(t) +  g_{\alpha,\Phi}(t), \quad t\in(0,T];\qquad u_\alpha (0) =  u_\alpha^0 , \quad \alpha> \mathbf{0}. 
\end{equation}
Note that the inhomogeneous part $g_{\alpha,\Phi}$ in \eqref{sistem 3} does not contain any of the functions $u_\beta,\;\beta<\alpha$ for $|\alpha|=1$, while for $|\alpha|>1$ it involves also $u_\beta,\;\beta<\alpha$. Hence, we distinguish these two cases.
\begin{enumerate}
\item[(a)] 
Let $|\alpha|=1$, i.e. $\alpha=\varepsilon_k$, $k\in \mathbb N.$ Then $g_{\varepsilon_k,\Phi} =f_{\varepsilon_k}$, $k\in \mathbb N$ and thus \eqref{sistem 3} transforms to 
\begin{equation}\label{det jed duzina 1}
\frac{d}{dt}u_{\varepsilon_k} (t)=  B_{\varepsilon_k,\Phi}(t)  \, u_{\varepsilon_k} (t) + f_{\varepsilon_k}(t),\quad t\in(0,T];  \qquad  u_{\varepsilon_k} (0) =  u_{\varepsilon_k}^0.
\end{equation}
\item[(b)] Let $|\alpha|>1.$ Then 
\[
\frac{d}{dt}u_\alpha (t) =  B_{\alpha,\Phi}(t)  \, u_\alpha(t) +  g_{\alpha,\Phi}(t), \quad t\in(0,T];\qquad  u_\alpha (0) =  u_\alpha^0.
\]
\end{enumerate}

Each solution $u$ to \eqref{NLJ} can be represented in the form \eqref{proces} and hence its coefficients $u_{\mathbf{0}} $ and $u_\alpha$  for  $\alpha> \mathbf{0}$ must satisfy   \eqref{nelinearna det} and  \eqref{sistem 3} respectively. Vice versa, if the coefficients $u_{\mathbf{0}} $ and $u_\alpha$  for  $\alpha> \mathbf{0}$ solve \eqref{nelinearna det} and  \eqref{sistem 3} respectively, and if for every $t\in [0, T]$ the series in \eqref{proces} represented by these coefficients exists in $X\otimes (FS)'$, then it defines a solution to \eqref{NLJ}.

\begin{definition} 
An $X-$valued generalized stochastic process $u(t)=\sum_{\alpha\in\mathcal{I}}u_\alpha(t)H_\alpha\in X\otimes (FS)',\;t\in[0,T]$ is a coordinatewise classical solution to \eqref{NLJ} if $u_\mathbf{0}$ is a classical solution to \eqref{nelinearna det} and for every $\alpha\in\mathcal{I}\setminus\{\mathbf{0}\},$ the coefficient $u_\alpha$ is a classical solution to \eqref{sistem 3}. The coordinatewise  solution $u(t)\in X\otimes (FS)',\;t\in[0,T]$ is an almost classical solution to \eqref{NLJ} if $u\in C([0,T], X \otimes (FS)').$ An almost classical solution is a classical solution if $u\in C([0,T], X \otimes (FS)') \cap C^1((0,T], X\otimes (FS)')$.
\end{definition}

We assume that the following hold:
\begin{enumerate}
\item[(A1)] The operators  $A_\alpha,\;\alpha\in \mathcal{I}$ are infinitesimal generators of $C_0-$semigroups $\{T_\alpha (s)\}_{s\geq 0}$ with a common domain $D_{\alpha}=D,\;\alpha\in \mathcal{I}$  dense in $X$.  We assume that there exist constants $m\geq 1$ and $w\in \mathbb{R}$ such that
\[
\|T_\alpha(s)\|\leq me^{w s},\;s\geq 0 \quad\mbox{for all}\quad\alpha\in \mathcal{I}.
\]
The action of  $\mathbf A$ is given by 
\[
\mathbf A(u)=\sum_{\alpha\in\mathcal I}A_\alpha(u_\alpha)H_\alpha,
\] 
for $u \in \mathbb{D}\subseteq D\otimes (FS)'$ of the form \eqref{proces},  where
\begin{align*}
\mathbb{D}=\Big\{u= \sum_{\alpha\in\mathcal I}u_\alpha \, \,  H_\alpha & \in D\otimes (FS)':\\
& \exists r\geq 2,\;\exists p\geq 0,\; \sum_{\alpha\in\mathcal I}\|A_\alpha(u_\alpha)\|^2_X \alpha!(r^{|\alpha|^3}!)^{-1} (2\mathbb N)^{-p\alpha}<\infty\Big\}.
\end{align*}
 			
\item[(A2)] The initial value $u^0=\sum_{\alpha\in\mathcal{I}}u^0_\alpha H_\alpha\in\mathbb{D}$, i.e. $u_\alpha^0\in D$ for every $\alpha\in\mathcal{I}$ and there exist $r\geq 2$ and  $p\geq 0$ such that 
\begin{align*}
\|u^0\|^2_{X \otimes(FS)_{-r,-p}} & =\sum_{\alpha\in \mathcal{I}}\|u_\alpha^0\|_X^2 \alpha!(r^{|\alpha|^3}!)^{-1} (2\mathbb N)^{-p \alpha}<\infty, \\
\|\mathbf A u^0\|^2_{X \otimes(FS)_{-r,-p}} & =\sum_{\alpha\in \mathcal{I}}\|A_\alpha (u_\alpha^0)\|_X^2 \alpha!(r^{|\alpha|^3}!)^{-1} (2\mathbb N)^{-p \alpha}<\infty.
\end{align*}
 		
\item[(A3)] The inhomogeneous part $f(t,\omega)=\sum_{\alpha\in\mathcal{I}}f_\alpha(t)H_\alpha(\omega),\;t\in[0,T],\;\omega\in \Omega$ belongs to $ C^1([0,T],X)\otimes(FS)';$ hence $t\mapsto f_\alpha(t)\in C^1([0,T],X),\;\alpha\in \mathcal{I}$ and there exist $r\geq 2$ and $p\geq 0$ such that
\begin{align*}
\|f\|&^2_{C^1([0,T],X) \otimes(FS)_{-r,-p}}  =\sum_{\alpha\in \mathcal{I}}\|f_\alpha\|^2_{C^1([0,T],X)} \alpha!(r^{|\alpha|^3}!)^{-1} (2\mathbb{N})^{-p\alpha}  \\
& =\sum_{\alpha\in \mathcal{I}}\Big(\sup_{t\in[0,T]}\|f_\alpha(t)\|_X+\sup_{t\in[0,T]}\|f'_\alpha(t)\|_X\Big)^2 \alpha!(r^{|\alpha|^3}!)^{-1} (2\mathbb{N})^{-p\alpha}<\infty.
\end{align*} 		
 			
\item[(A4)] The Cauchy problem
\[
\frac{d}{dt} u_{\mathbf{0}} (t) =  A_{\mathbf{0}} u_{\mathbf{0}}  (t) +   \Phi \big(u_\mathbf{0}(t)\big) + f_\mathbf{0}(t) ,\quad t\in(0,T]; \quad u_{\mathbf{0}}(0) = u_{\mathbf{0}}^0,
\]
has a unique classical solution $u_{\mathbf{0}}\in C^1([0,T],X).$ 
\end{enumerate}

Now we focus on solving \eqref{sistem 3} for $\alpha > \mathbf{0}$. 

\begin{lemma}\label{Lema} Let the assumptions (A1)-(A4) be fulfilled. Then for every $\alpha>\mathbf{0}$ the evolution system \eqref{sistem 3} has a unique classical solution $u_\alpha\in C^1([0,T],X).$ 
\end{lemma}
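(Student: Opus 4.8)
The plan is to read \eqref{sistem 3} as a non-autonomous linear Cauchy problem whose generator family is a bounded perturbation of the fixed generator $A_\alpha$, and to invoke the evolution-system existence theory recalled in Subsection~\ref{0.1}. I would organize the argument in three stages: first verify that $\{B_{\alpha,\Phi}(t)\}_{t\in[0,T]}$ is a stable family of infinitesimal generators with a common, $t$-independent domain; then verify the time-regularity of $t\mapsto B_{\alpha,\Phi}(t)x$ and of the inhomogeneous term $g_{\alpha,\Phi}$, the latter by induction on $|\alpha|$; and finally quote the abstract theorem to obtain the unique classical solution and read off $u_\alpha\in C^1([0,T],X)$.

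For the stability step, I would first note that by (A4) one has $u_\mathbf{0}\in C^1([0,T],X)$, so its image over $[0,T]$ is compact in $X$; since $\Phi'$ is entire, hence continuous, $M:=\sup_{t\in[0,T]}\|\Phi'(u_\mathbf{0}(t))\|_X$ is finite. As $X$ is a Banach algebra, multiplication by $\Phi'(u_\mathbf{0}(t))$ is a bounded operator of norm at most $M$, so $B_{\alpha,\Phi}(t)=A_\alpha+\Phi'(u_\mathbf{0}(t))\,Id$ has the $t$-independent domain $D(A_\alpha)=D$. By (A1), $A_\alpha$ generates a $C_0$-semigroup with $\|T_\alpha(s)\|\le m e^{ws}$, so the constant family $\{A_\alpha\}_{t\in[0,T]}$ is stable with constants $m$ and $w$; the bounded-perturbation statement of Subsection~\ref{0.1} then makes $\{B_{\alpha,\Phi}(t)\}_{t\in[0,T]}$ stable with constants $m$ and $w+Mm$.

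For the regularity step, for fixed $x\in D$ I would write $B_{\alpha,\Phi}(t)x=A_\alpha x+\Phi'(u_\mathbf{0}(t))\,x$; the first summand is constant in $t$, and $t\mapsto\Phi'(u_\mathbf{0}(t))$ is $C^1$ by the chain-rule regularity recorded at the end of Subsection~\ref{0.3} applied to the entire function $\Phi'$, so $t\mapsto B_{\alpha,\Phi}(t)x$ is continuously differentiable in $X$. It then remains to show $g_{\alpha,\Phi}=r_{\alpha,\Phi}+f_\alpha\in C^1([0,T],X)$, where $f_\alpha\in C^1([0,T],X)$ by (A3). Here I would induct on $|\alpha|$: for $|\alpha|=1$ one has $r_{\alpha,\Phi}\equiv 0$, so $g_{\varepsilon_k,\Phi}=f_{\varepsilon_k}$ is already $C^1$; for $|\alpha|\ge 2$, assuming $u_\beta\in C^1([0,T],X)$ for all $\mathbf{0}<\beta<\alpha$, formula \eqref{ostatak} exhibits $r_{\alpha,\Phi}$ as a finite sum of products of factors $\Phi^{(k)}(u_\mathbf{0}(t))/k!$ (each $C^1$ by the same chain-rule argument applied to the entire function $\Phi^{(k)}$) with finitely many lower-order coordinates $u_\beta$, $\beta<\alpha$. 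Since products in the Banach algebra $X$ of $C^1$-functions are again $C^1$, this yields $r_{\alpha,\Phi}\in C^1([0,T],X)$ and hence $g_{\alpha,\Phi}\in C^1([0,T],X)$.

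Finally, with $\{B_{\alpha,\Phi}(t)\}_{t\in[0,T]}$ stable, domain $D$ constant, $t\mapsto B_{\alpha,\Phi}(t)x$ continuously differentiable for every $x\in D$, the datum $u_\alpha^0\in D$ by (A2), and $g_{\alpha,\Phi}\in C^1([0,T],X)$, all hypotheses of the evolution theorem \cite[Theorem~5.3]{Pazy} recalled in Subsection~\ref{0.1} are met, giving a unique classical solution $u_\alpha(t)=S_\alpha(t,0)u_\alpha^0+\int_0^t S_\alpha(t,r)g_{\alpha,\Phi}(r)\,dr$ for the associated evolution system $S_\alpha(t,s)$, with $u_\alpha\in C^1([0,T],X)$ by the regularity statement recorded there; feeding this back closes the induction for all $\alpha>\mathbf{0}$. \emph{The main obstacle} I expect is the regularity bookkeeping inside the induction: one must guarantee that $g_{\alpha,\Phi}$ inherits $C^1$-regularity from the lower-order coefficients through the Banach-algebra products and the chain rule, since the solvability of \eqref{sistem 3} at level $\alpha$ hinges precisely on this regularity of its inhomogeneous term.
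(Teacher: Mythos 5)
Your proposal is correct and follows essentially the same route as the paper's proof: viewing $B_{\alpha,\Phi}(t)=A_\alpha+\Phi'(u_\mathbf{0}(t))\,Id$ as a uniformly bounded perturbation of the stable constant family $\{A_\alpha\}$ with $t$-independent domain $D$, checking $C^1$-regularity of $t\mapsto B_{\alpha,\Phi}(t)x$ and (by induction on $|\alpha|$, via \eqref{ostatak}) of $g_{\alpha,\Phi}$, and then invoking \cite[Theorem 5.3]{Pazy}. The only cosmetic difference is that you bound the perturbation by continuity of $\Phi'$ on the compact image of $u_\mathbf{0}$, whereas the paper uses the explicit majorant $\varphi'(M_\Phi)$ (which it later reuses for the constant $w_\Phi$ in the main theorem); both are valid.
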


\begin{proof}
First, for every $\alpha> \mathbf{0},$ we consider the family of operators $B_{\alpha,\Phi}(t) =  A_\alpha + \Phi'\big(u_\mathbf{0}(t)\big) Id,$ $t\in[0,T].$  According to assumption (A1), the constant family $\{A_\alpha(t)\}_{t\in[0,T]}=\{A_\alpha\}_{t\in[0,T]}$ is a stable family of infinitesimal generators of a $C_0-$semigroup $\{T_\alpha(s)\}_{s\geq 0}$ on $X$ satisfying $\|T_\alpha(s)\|\leq me^{w s}$  with stability constants $m\geq 1$ and $w\in\mathbb{R}.$ Let 
\begin{equation}\label{ocena u^0}
M_\Phi=\sup_{t\in[0,T]}\|u_\mathbf{0}(t)\|_X.
\end{equation} 
The perturbation $\Phi'\big(u_\mathbf{0}(t)\big)\,Id:X\to X,$ $t\in[0,T]$ is a family of uniformly bounded linear operators such that
\begin{align*}
\|\Phi'\big(u_\mathbf{0}(t)\big)x\|_X & =\|\Phi'\big(u_\mathbf{0}(t)\big)\|_X\|x\|_X = \left\| \sum_{n=0}^\infty a_{n+1} u^n_\mathbf{0}(t) \right\|_X \|x\|_X \\
&\leq \left( \sum_{n=0}^\infty \|a_{n+1}\|_X\|u_\mathbf{0}(t)\|^n_X \right)\|x\|_X  \leq \left( \sum_{n=0}^\infty \|a_{n+1}\|_X M^n_\Phi \right) \|x\|_X  \\
& \leq \varphi'(M_\Phi)\|x\|_X, \quad x\in X,\quad t\in[0,T],
\end{align*}
i.e. $\|\Phi'\big(u_\mathbf{0}(t)\big)Id\|\leq \varphi'(M_\Phi),$ $t\in [0,T].$ Here we have used the notation introduced in Subsection \ref{0.3}.  Thus, for every $\alpha>\mathbf{0},$ the family $\{A_\alpha+\Phi'\big(u_\mathbf{0}(t)\big)Id\}_{t\in[0,T]}$ is a stable family of infinitesimal generators with stability constants $m$ and $w+\varphi'(M_\Phi) m.$ By assumption (A4) the function $u_\mathbf{0}\in C^1([0,T],X)$ so we obtain continuous differentiability of  $(A_\alpha+\Phi'\big(u_\mathbf{0}(t)\big)Id)x,$ $t\in[0,T]$ for every $x\in D$ and for every $\alpha>\mathbf{0}.$ Additionally, the domain of the operators $\Phi'\big(u_\mathbf{0}(t)\big)Id$ is the entire space $X$ which implies that all of the operators $B_{\alpha,\Phi}(t),$ $t\in[0,T]$ have a common domain $D(B_{\alpha,\Phi}(t))=D(A_\alpha)=D$ not depending on $t.$ Notice here that assumption (A1) additionally provides the same domain $D$ of the family $\{B_{\alpha,\Phi}(t)\}_{t\in[0,T]}$ for all $\alpha>\mathbf{0}.$

Finally, one can associate the unique evolution system $ S_{\alpha,\Phi}(t,s)$,  for $0 \leq s \leq t\leq T$ for all $\alpha> \mathbf{0}$ to the system \eqref{sistem 3} such that 
\begin{equation}\label{ocena S_alpha}
\|S_{\alpha,\Phi}(t,s)\| \leq  me^{w_\Phi \, (t-s)} \leq me^{w_\Phi (T-s)}, \quad  0 \leq s \leq t\leq T, \quad \alpha> \mathbf{0},
\end{equation}
where $w_\Phi=w+\varphi'(M_\Phi) m$, see \cite[Thm 4.8., p. 145]{Pazy}. Without loss of generality we may assume that $w>0$ and thus will be $w_\Phi>0$.

Now one can solve the infinite system of the Cauchy problems \eqref{sistem 3} by induction on the length of the multiindex $\alpha$. Let $|\alpha|=1.$ Since $f_{\varepsilon_k}\in C^1([0,T],X),$ we obtain the unique classical  solution $u_{\varepsilon_k}\in C^1([0,T],X)$ to \eqref{det jed duzina 1} given by
\begin{equation}\label{sol hom}
u_{\varepsilon_k} (t)= S_{\varepsilon_k,\Phi} (t,0) \, u_{\varepsilon_k}^0 + \int_0^t \, S_{\varepsilon_k,\Phi} (t,s) \, f_{\varepsilon_k}(s) \, ds, \quad t\in [0,T].
\end{equation}
Now let for every $\beta\in \mathcal{I}$ such that $\mathbf{0}<\beta<\alpha$ the unique classical solution to \eqref{sistem 3} satisfies  $u_{\beta}\in C^1([0,T],X).$ Then for fixed $|\alpha|>1$ the inhomogeneous part $g_{\alpha,\Phi}\in C^1([0,T],X)$ and the solution to \eqref{sistem 3}  is of the form
\begin{equation}\label{sol nehom}
u_{\alpha}  (t) = S_{\alpha,\Phi} (t,0) \, u_{\alpha}^0   + \int_0^t \, S_{\alpha,\Phi} (t,s) \, g_{\alpha,\Phi}(s) \, ds , \quad t\in [0,T],
\end{equation}
where $u_{\alpha}\in C^1([0,T],X).$ For more details see \cite[Thm 5.3., p. 147]{Pazy}. 
\end{proof}

We proceed with one technical lemma. 

\begin{lemma} \label{N(alfa, k)2} Let $\alpha\in \mathcal{I}\setminus \{\mathbf{0}\}$ be given and $k\in \mathbb{N},$ $k\leq |\alpha|.$ Denote by $N(\alpha,k)$  the number of all possible combinations in which a multiindex $\alpha$ can be written as a sum of $k$ strictly smaller and nonzero multiindices. Then
\[
N(\alpha,k)\leq 2^{k|\alpha|},\qquad\alpha\in \mathcal{I}\setminus \{\mathbf{0}\},\quad k\in\mathbb{N},\;k\leq |\alpha|.
\]
\end{lemma}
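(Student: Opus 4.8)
The plan is to interpret $N(\alpha,k)$ combinatorially and then reduce the count to an independent, coordinate-wise one. Recall that the nested sums in \eqref{ostatak} run over chains $\mathbf{0}<\gamma_{k-1}<\gamma_{k-2}<\cdots<\gamma_1<\alpha$, and that setting $\beta_1=\alpha-\gamma_1$, $\beta_j=\gamma_{j-1}-\gamma_j$ for $2\le j\le k-1$, and $\beta_k=\gamma_{k-1}$ places such a chain in bijection with an ordered decomposition $\alpha=\beta_1+\cdots+\beta_k$ into $k$ strictly positive multiindices $\beta_j>\mathbf{0}$. Thus $N(\alpha,k)$ equals the number of such ordered decompositions of $\alpha$ into $k$ nonzero parts; note that for $k\ge 2$ each part is then automatically strictly smaller than $\alpha$, so the two requirements ``nonzero'' and ``strictly smaller'' coincide in the relevant range of the counting.

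First I would relax the positivity constraint to obtain an upper bound. If we allow each $\beta_j\ge\mathbf{0}$ (parts may vanish), then the condition $\alpha=\beta_1+\cdots+\beta_k$ reads coordinate-wise as $\alpha_i=(\beta_1)_i+\cdots+(\beta_k)_i$ for every $i\in\mathbb{N}$, and the choices in distinct coordinates are independent. Hence the number of such possibly degenerate ordered decompositions factorizes, by the standard stars-and-bars count, as $\prod_{i\in\mathbb{N}}\binom{\alpha_i+k-1}{k-1}$; since dropping the nonzero constraint can only enlarge the count, we get
\[
N(\alpha,k)\le \prod_{i:\,\alpha_i>0}\binom{\alpha_i+k-1}{k-1},
\]
where only the finitely many coordinates in the support $S=\{i:\alpha_i>0\}$ contribute, as the factor equals $1$ whenever $\alpha_i=0$.

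Next I would apply the elementary binomial bound $\binom{\alpha_i+k-1}{k-1}\le\sum_{l=0}^{\alpha_i+k-1}\binom{\alpha_i+k-1}{l}=2^{\alpha_i+k-1}$ to each factor, which yields
\[
N(\alpha,k)\le \prod_{i\in S}2^{\alpha_i+k-1}=2^{\,|\alpha|+|S|(k-1)}.
\]
Finally, since $\alpha_i\ge 1$ for each $i\in S$ forces $|S|\le\sum_{i\in S}\alpha_i=|\alpha|$, I conclude that $N(\alpha,k)\le 2^{\,|\alpha|+|\alpha|(k-1)}=2^{k|\alpha|}$, as claimed.

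I expect no serious obstacle, as the estimate is a routine counting bound once the combinatorial reformulation is in place; the only point requiring genuine care is the first, conceptual step — correctly identifying $N(\alpha,k)$ with ordered $k$-part decompositions and justifying that relaxing the positivity constraint produces a legitimate upper bound whose coordinate-wise factorization is countable by stars and bars. The binomial estimate and the inequality $|S|\le|\alpha|$ are then immediate.
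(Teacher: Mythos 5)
Your proof is correct and follows essentially the same route as the paper: coordinate-wise counting via $\binom{\alpha_i+k-1}{k-1}$, the bound $\binom{\alpha_i+k-1}{k-1}\le 2^{\alpha_i+k-1}$, and the observation that the number of nonzero coordinates is at most $|\alpha|$. Your additional opening step --- the explicit bijection between the chains $\mathbf{0}<\gamma_{k-1}<\cdots<\gamma_1<\alpha$ and ordered decompositions into nonzero parts, and the remark that relaxing positivity only enlarges the count --- is a welcome clarification of what the paper leaves implicit, but it does not change the substance of the argument.
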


\begin{proof}
Here we will use a known combinatorial result about the number of partitions of a positive integer $n$ into $k$ parts.  Namely, a positive integer $n\in \mathbb{N}$ can be written as a sum of $k$ integers, i.e. in the form $n=x_1+\dots+x_k,$ $x_i\in \{0,1,\dots, n\},$ in $\binom{n+k-1}{n}=\binom{n+k-1}{k-1}$ different ways. 

Let $\alpha\in \mathcal{I}\setminus \{\mathbf{0}\}$ be written as a sum of $k$ multiindices $\gamma^1,\dots,\gamma^k\in \mathcal{I}\setminus\{\mathbf{0}\}.$ Then 
\[
(\alpha_1,\alpha_2,\dots, \alpha_s,0,0,\dots)=(\gamma^1_1,\gamma^1_2,\dots,\gamma^1_s,0,0,\dots)+\dots+(\gamma^k_1,\gamma^k_2,\dots,\gamma^k_s,0,0,\dots),
\]
i.e.  each component of $\alpha$ can be decomposed 
\begin{align*}
& \alpha_1=\gamma^1_1+\dots+\gamma^k_1,\quad \gamma^i_1\in\{0,\dots, \alpha_1\},\;i\in \{1,\dots,k\}\quad \mbox{in}\quad \binom{\alpha_1+k-1}{k-1} \quad\mbox{different ways;}\\
& \alpha_2=\gamma^1_2+\dots+\gamma^k_2,\quad \gamma^i_2\in\{0,\dots, \alpha_2\},\;i\in \{1,\dots,k\}\quad \mbox{in}\quad \binom{\alpha_2+k-1}{k-1} \quad\mbox{different ways;}\\
& ...\\
& \alpha_s=\gamma^1_s+\dots+\gamma^k_s,\quad \gamma^i_s\in\{0,\dots, \alpha_s\},\;i\in \{1,\dots,k\}\quad \mbox{in}\quad \binom{\alpha_s+k-1}{k-1} \quad\mbox{different ways.}
\end{align*}
Hence, $N(\alpha,k)\leq \prod_{j=1}^{s}\binom{\alpha_j+k-1}{k-1},$ since by this product we also counted for partitions that include the $\mathbf{0}$ multiindex. Finally, we obtain 
\[
N(\alpha,k)\leq \prod_{j=1}^s\binom{\alpha_j+k-1}{k-1}\leq \prod_{j=1}^s 2^{\alpha_j+k-1}=2^{s(k-1)}\prod_{j=1}^s 2^{\alpha_j} = 2^{s(k-1)}2^{|\alpha|}\leq 2^{k|\alpha|},
\]
because $s\leq |\alpha|,$ since $s$ is actually the number of nonzero coordinates of $\alpha\in \mathcal{I}\setminus\{\mathbf{0}\}.$ 
\end{proof}

\subsection{Proof of the main theorem}\label{1.1}

The statement of the main theorem is as follows.

\begin{theorem}\label{main}
Let the assumptions (A1)-(A4) be fulfilled.  Then there exists a unique almost classical solution $u\in C([0,T],X\otimes (FS)')$ to \eqref{NLJ}. 
\end{theorem}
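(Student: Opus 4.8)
The plan is to construct the solution coordinatewise and then prove that the resulting chaos expansion converges uniformly in $t$ in a single weighted Hida space $X\otimes(FS)_{-r,-p}$. Lemma \ref{Lema} already supplies, for every $\alpha$, a unique classical coefficient $u_\alpha\in C^1([0,T],X)$ given by the mild formulas \eqref{sol hom}--\eqref{sol nehom}, so the coordinatewise solution exists and is unique. Hence it remains to (i) exhibit $r\geq 2$ and $p\geq 0$ for which $\sup_{t\in[0,T]}\|\sum_\alpha u_\alpha(t)H_\alpha\|_{X\otimes(FS)_{-r,-p}}<\infty$ together with $t$-continuity, which places $u$ in $C([0,T],X\otimes(FS)')$ and makes it an almost classical solution; and (ii) deduce uniqueness in $C([0,T],X\otimes(FS)')$ from the coordinatewise uniqueness, since any two almost classical solutions have coefficients solving \eqref{nelinearna det} and \eqref{sistem 3}, so by (A4) and induction on $|\alpha|$ they coincide.

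For the convergence I would first derive a scalar recursion for $d_\alpha:=\sup_{t\in[0,T]}\|u_\alpha(t)\|_X$. Applying the uniform bound \eqref{ocena S_alpha} to \eqref{sol hom}--\eqref{sol nehom} gives $d_\alpha\leq me^{w_\Phi T}\big(\|u_\alpha^0\|_X+T\sup_t\|g_{\alpha,\Phi}(t)\|_X\big)$, where $g_{\alpha,\Phi}=r_{\alpha,\Phi}+f_\alpha$. The genuinely nonlinear contribution is $r_{\alpha,\Phi}$ from \eqref{ostatak}; here I would estimate $\|\Phi^{(k)}(u_{\mathbf 0}(t))\|_X\leq\varphi^{(k)}(M_\Phi)$ with $M_\Phi$ as in \eqref{ocena u^0}, and then invoke the analyticity bound \eqref{const C} to get $\|\Phi^{(k)}(u_{\mathbf 0}(t))\|_X/k!\leq C^k$ for a constant $C$ depending only on a compact neighbourhood of $[0,M_\Phi]$. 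Using submultiplicativity of the Banach-algebra norm on the products $u_{\alpha-\gamma_1}\cdots u_{\gamma_{k-1}}$ and Lemma \ref{N(alfa, k)2} (so $N(\alpha,k)\leq 2^{k|\alpha|}$) to count the nested sums, this yields a closed recursive inequality of the form $d_\alpha\leq h_\alpha+L\sum_{k=2}^{|\alpha|}C^k\sum_{\beta_1+\cdots+\beta_k=\alpha,\,\beta_i>\mathbf 0}d_{\beta_1}\cdots d_{\beta_k}$, where $h_\alpha$ collects the $u^0$- and $f$-terms (square-summable against the weight by (A2)--(A3)) and $L=me^{w_\Phi T}\max\{1,T\}$.

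The crux is to show that, after inserting this recursion into the weighted series, a single pair $r,p$ makes it converge. I would multiply $d_\alpha$ by the weight $(\alpha!/r^{|\alpha|^3}!)^{1/2}(2\mathbb N)^{-p\alpha/2}$ and proceed by induction on $|\alpha|$. Three ingredients close the estimate: the iterated super-additivity of the factorial weight, $\prod_{i}r^{|\beta_i|^3}!\leq r^{\sum_i|\beta_i|^3}!\leq r^{|\alpha|^3}!$ whenever $\sum_i|\beta_i|=|\alpha|$ (obtained by iterating \eqref{fakt nejed} and using $\sum_i|\beta_i|^3\leq(\sum_i|\beta_i|)^3$), which lets the weight of $u_\alpha$ absorb the product of the weights of its factors; the inequality $(\alpha+\beta)!\leq\alpha!\beta!(2\mathbb N)^{\alpha+\beta}$, iterated to bound $\alpha!/\prod_i\beta_i!$; and cancellation of the $(2\mathbb N)$-weights because $\sum_i\beta_i=\alpha$. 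The decisive feature is that the cubic exponent makes the gap $r^{|\alpha|^3}!/\prod_i r^{|\beta_i|^3}!$ super-exponentially large, since the margin $(\sum_i|\beta_i|)^3-\sum_i|\beta_i|^3$ is of order $|\alpha|^2$; it therefore dominates the combined growth of the counting factor $2^{k|\alpha|}$, the coefficient factor $C^k$, and the multinomial factor $\alpha!/\prod_i\beta_i!$. Choosing $r$ (and $p>1$ for the residual $\sum_\alpha(2\mathbb N)^{-p\alpha}<\infty$) large enough then yields $\sup_t\|u(t)\|_{X\otimes(FS)_{-r,-p}}<\infty$; since each coefficient $u_\alpha$ is continuous and the convergence is uniform in $t$, the limit lies in $C([0,T],X\otimes(FS)_{-r,-p})\subset C([0,T],X\otimes(FS)')$.

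I expect the main obstacle to be exactly this last weighted bookkeeping: the fully nonlinear term couples $u_\alpha$ to sums over all decompositions of $\alpha$ into arbitrarily many (namely $k$ up to $|\alpha|$) strictly smaller nonzero multiindices, with products of correspondingly many coefficients and $k$-dependent analytic weights, so one must verify that the cubic-exponent factorial weight simultaneously controls the number of decompositions, the factor $C^k$ summed over $k$, and the multinomial blow-up. This is precisely the point at which the tailored space $(FS)'$, rather than the classical Kondratiev spaces, is required.
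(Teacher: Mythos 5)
Your overall architecture coincides with the paper's proof: coordinatewise existence and uniqueness from Lemma \ref{Lema}, a sup-norm recursion for $d_\alpha=\sup_{t\in[0,T]}\|u_\alpha(t)\|_X$ based on \eqref{ocena S_alpha}, the bounds $\|\Phi^{(k)}(u_{\mathbf 0}(t))\|_X\le\varphi^{(k)}(M_\Phi)$ and \eqref{const C}, the counting estimate $N(\alpha,k)\le 2^{k|\alpha|}$ of Lemma \ref{N(alfa, k)2}, and an induction on $|\alpha|$ closed by the super-exponential gap that the cubic exponent creates in the factorial weight (the paper's inequality $(|\alpha|-|\beta|)^3+|\beta|^3\le(|\alpha|-1)^3+1$ combined with \eqref{fakt nejed}). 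However, one step of your plan fails as written: the treatment of the multinomial factor.

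Because you run the induction on the weighted quantities $d_\alpha\,(\alpha!)^{1/2}(r^{|\alpha|^3}!)^{-1/2}(2\mathbb N)^{-p\alpha/2}$, the ratio $\alpha!/\prod_{i}\beta_i!$ appears in the weight quotient $w_\alpha/\prod_i w_{\beta_i}$ and needs an \emph{upper} bound. You propose to obtain it by iterating $(\alpha+\beta)!\le\alpha!\,\beta!\,(2\mathbb N)^{\alpha+\beta}$, which gives $\alpha!/\prod_i\beta_i!\le(2\mathbb N)^{(k-1)\alpha}$. This bound depends on the \emph{support} of $\alpha$, not only on its length: for $\alpha=m\varepsilon_j$ it is of order $j^{(k-1)m}$ and blows up as $j\to\infty$, while the factorial gap $r^{|\alpha|^3}!/\prod_i r^{|\beta_i|^3}!$ and the factors $2^{k|\alpha|}$, $C^k$ depend only on $|\alpha|$ and stay fixed; moreover, since the $(2\mathbb N)$-weights cancel identically in $w_\alpha/\prod_i w_{\beta_i}$ for \emph{every} exponent, no shift of $p$ (or of the final $q$) can absorb this surplus. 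Hence with this tool the induction does not close. The repair is elementary and leaves the rest of your argument intact: either use the support-independent bound $\alpha!/\prod_{i}\beta_i!=\prod_n\binom{\alpha_n}{\beta_{1,n},\dots,\beta_{k,n}}\le k^{|\alpha|}$, which the gap then dominates exactly as it dominates $2^{k|\alpha|}$; or reformulate the induction as the paper does, proving the pointwise bound $L_\alpha\le\alpha!\,(2\mathbb N)^{p_0\alpha}r_0^{|\alpha|^3}!$ of \eqref{ih sp}, which requires only the trivial direction $\prod_i\beta_i!\le\alpha!$ --- at the price of then passing, as in the paper's Step 3, to the strictly larger space with $s=r_0^5$ and $q>2p_0+1$, since that pointwise bound is not summable against the weights of the original space.
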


\begin{proof} According to Lemma \ref{Lema} for every $\alpha>\mathbf{0}$ the evolution equation \eqref{sistem 3} has a unique classical solution $u_\alpha\in C^1([0,T],X).$  Thus, the formal sum  $u(t,\omega)=\sum_{\alpha\in\mathcal{I}}u_\alpha(t)H_\alpha(\omega),$ $t\in[0,T],$ $\omega\in \Omega$ has coefficients that are all unique classical solutions to the corresponding deterministic equation \eqref{sistem 3}.  We have to prove that this formal sum is convergent in the sense of Definition \ref{def gen pr}. Note here that this solution will be unique  due to the uniqueness of the coordinatewise (classical) solutions $u_\alpha$, $\alpha\in \mathcal{I}$ given by  \eqref{sol hom} and \eqref{sol nehom} and due to the uniqueness in the Wiener--It\^o chaos expansion.  Additionally, once we show that this formal sum $u$ is convergent, its continuity  follows immediately since all the coefficients $u_\alpha$, $\alpha\in \mathcal I$ are continuous, see \eqref{C^k proc}.

Now we prove that $u\in C([0,T],X\otimes (FS)')$.  Let $u^0\in X\otimes (FS)'$ be an initial condition satisfying assumption (A2) which states that there exist $\tilde r\geq 2,$ $\tilde{p}\geq 0$ and $\tilde{K}>0$ such that $\sum_{\alpha\in\mathcal{I}}\|u^0_\alpha\|_X^2\alpha! (\tilde r^{|\alpha|^3}!)^{-1} (2\mathbb{N})^{-\tilde{p}\alpha}=\tilde{K}.$ Then there also exist $r\geq 2,$ $p\geq 0$ and $K>0$ such that $\sum_{\alpha\in \mathcal{I}}\|u^0_\alpha\|_X^2 \alpha!^2(r^{|\alpha|^3}!)^{-2} (2\mathbb{N})^{-2p\alpha}=K^2$, or equivalently
\begin{equation}\label{ocena p. uslova}
(\exists r\geq 2)\; (\exists p\geq 0)\;(\exists K>0)\; (\forall \alpha\in\mathcal{I})\quad \|u_\alpha^0\|_X\leq K \alpha! r^{|\alpha|^3}! (2\mathbb{N})^{p\alpha}.
\end{equation}
The inhomogeneous part $f\in C^1([0,T],X)\otimes (FS)'$ satisfies assumption (A3) which states that there exist $\tilde r\geq 2$ and $\tilde{p}\geq 0$ such that $\sum_{\alpha\in\mathcal{I}}\sup_{t\in[0,T]}\|f_\alpha(t)\|_X^2 \alpha!(\tilde r^{|\alpha|^3}!)^{-1} (2\mathbb{N})^{-\tilde{p}\alpha}<\infty.$ Then there exist $r\geq 2,$ $p\geq 0$ and $K>0$ such that
\begin{equation}\label{ocena ih. uslova}
\sup_{t\in[0,T]}\|f_\alpha(t)\|_X\leq K \alpha! r^{|\alpha|^3}! (2\mathbb{N})^{p\alpha},\quad \alpha\in \mathcal{I}.
\end{equation}
The coefficients $u_\alpha,\;\alpha\in\mathcal{I},$ $\alpha>\mathbf{0}$ of the solution $u$ are given by \eqref{sol hom} and \eqref{sol nehom}. Denote by
\[
L_\alpha:=\sup_{t\in[0,T]}\|u_\alpha(t)\|_X,\quad \alpha\in\mathcal{I}.
\]

The proof will be divided into three steps. In the first step we will derive a recurrent formula for $L_\alpha,$ $\alpha \in \mathcal{I}.$ Then, in the second step, an upper bound for $L_\alpha,$ $;\alpha \in \mathcal{I}$ will be provided and finally, the third step will be devoted to proving that  $u\in C([0,T],X)\otimes (FS)'.$

\textbf{Step 1.} For $\alpha=\mathbf{0}$ using \eqref{ocena u^0} one obtains 
\[
L_\mathbf{0}=\sup_{t\in[0,T]}\|u_\mathbf{0}(t)\|_X=M_\Phi,
\]
since the solution to \eqref{nelinearna det} satisfies assumption (A4).

Let $|\alpha|=1.$ Then $\alpha=\varepsilon_k,\;k\in \mathbb{N}$ and using \eqref{sol hom} we have that
\[
\|u_{\varepsilon_k} (t)\|_X\leq \|S_{\varepsilon_k,\Phi} (t,0)\|\| u_{\varepsilon_k}^0\|_X+\int_0^t\|S_{\varepsilon_k,\Phi} (t,s)\|\|f_{\varepsilon_k}(s)\|_X ds, \quad t\in [0,T].
\]
From \eqref{ocena S_alpha} we obtain that
\begin{equation}\label{ocen int}
\int_0^t\|S_{\alpha,\Phi}(t,s)\|ds\leq\int_0^t me^{w_\Phi(t-s)}ds =m\frac{e^{w_\Phi t}-1}{w_\Phi}\leq \frac{m}{w_\Phi}e^{w_\Phi T},\quad t\in[0,T],\quad\alpha>\mathbf{0}
\end{equation} 
and now \eqref{ocena S_alpha}, \eqref{ocena p. uslova} and \eqref{ocena ih. uslova} imply that
\begin{align}\label{ocena L_epsilon}
L_{\varepsilon_k}&=\sup_{t\in[0,T]}\|u_{\varepsilon_k} (t)\|_X\leq\sup_{t\in[0,T]}\Big\{\|S_{\varepsilon_k,\Phi} (t,0)\|\| u_{\varepsilon_k}^0\|_X+\sup_{s\in[0,t]}\|f_{\varepsilon_k}(s)\|_X\int_0^t\|S_{\alpha,\Phi}(t,s)\|ds\Big\}\nonumber\\
&\leq me^{w_\Phi T}K\varepsilon_k!r^{|\varepsilon_k|^3}!(2\mathbb{N})^{p\varepsilon_k} +\frac{m}{w_\Phi}e^{w_\Phi T}K \varepsilon_k!r^{|\varepsilon_k|^3}! (2\mathbb{N})^{p\varepsilon_k}=\widetilde m K \varepsilon_k!r^{|\varepsilon_k|^3}! (2\mathbb{N})^{p\varepsilon_k}, 
\end{align}
for all $k\in\mathbb{N},$ where $\widetilde m=(m+\frac{m}{w_\Phi})e^{w_\Phi T}.$

Now let $\alpha\in\mathcal{I}$ be such that $|\alpha|\geq 2.$
Then from \eqref{sol nehom} one obtains
\[
u_\alpha(t)=S_{\alpha,\Phi}(t,0)u^0_\alpha+\int_0^t S_{\alpha,\Phi}(t,s)\big[ r_{\alpha,\Phi}(s)+f_\alpha(s)\big]ds,\quad t\in [0,T].
\]
From this we have
\begin{align*}
L_\alpha &=\sup_{t\in[0,T]}\|u_\alpha(t)\|_X \\
& \leq \sup_{t\in[0,T]}\Bigg\{\|S_{\alpha,\Phi}(t,0)\|\|u^0_\alpha\|_X+\int_0^t \|S_{\alpha,\Phi}(t,s)\|\|r_{\alpha,\Phi}(s)\|_X ds \\
&\hspace*{7cm} +\int_0^t\|S_{\alpha,\Phi}(t,s)\|\|f_\alpha(s)\|_Xds\Bigg\}\\
&\leq \sup_{t\in[0,T]}\Bigg\{me^{w_\Phi t}\|u^0_\alpha\|_X+\sup_{s\in[0,t]}\|r_{\alpha,\Phi}(s)\|_X\int_0^t \|S_{\alpha,\Phi}(t,s)\|ds  \\
& \hspace*{7cm}+ \sup_{s\in[0,t]}\|f_\alpha (s)\|_X\int_0^t\|S_{\alpha,\Phi}(t,s)\|ds\Bigg\}.
\end{align*}
Using \eqref{ocen int} we obtain
\begin{align*}
L_\alpha &=\sup_{t\in[0,T]}\|u_\alpha(t)\|_X  \\
& \leq me^{w_\Phi T}\|u^0_\alpha\|_X+\frac{m}{w_\Phi}e^{w_\Phi T} \sup_{t\in[0,T]}\|r_{\alpha,\Phi}(t)\|_X  + \frac{m}{w_\Phi}e^{w_\Phi T}\sup_{t\in[0,T]}\|f_\alpha(t)\|_X\\
& \leq \widetilde m K\alpha!r^{|\alpha|^3}! (2\mathbb{N})^{p\alpha}+\frac{m}{w_ \Phi}e^{w_\Phi T} \sup_{t\in[0,T]}\|r_{\alpha,\Phi}(t)\|_X,
\end{align*} 
where again $\widetilde m=(m+\frac{m}{w_\Phi})e^{w_\Phi T}.$ Since $\widetilde m \geq \frac{m}{w_\Phi} e^{w_\Phi T},$  one easily obtains
\[
L_\alpha\leq \widetilde m \Big(K \alpha!r^{|\alpha|^3}! (2\mathbb{N})^{p\alpha}+ \sup_{t\in[0,T]}\|r_{\alpha,\Phi}(t)\|_X \Big).
\]
In the following estimate we will use that
\begin{align*}
\sup_{t\in [0,T]}\|\Phi^{(k)}\big(u_\mathbf{0}(t)\big)\|_X & =\sup_{t\in [0,T]} \left\| \sum_{n=0}^\infty a_{n+k} u^n_\mathbf{0}(t) \right\|_X \leq \sup_{t\in [0,T]} \left( \sum_{n=0}^\infty \|a_{n+k}\|_X\|u_\mathbf{0}(t)\|^n_X \right)\\
&\leq \sum_{n=0}^\infty \|a_{n+k}\|_X \left(\sup_{t\in [0,T]} \|u_\mathbf{0}(t)\|_X \right)^n \leq \sum_{n=0}^\infty \|a_{n+k}\|_X M^n_\Phi  \leq \varphi^{(k)}(M_\Phi) ,
\end{align*}
see  Section \ref{0.3}.  Now, using \eqref{ostatak} one obtains
\begin{align*}
& \sup_{t\in[0,T]}\|r_{\alpha,\Phi}(t)\|_X\leq \\
& \leq \sum_{k=2}^{|\alpha|}\frac{\varphi^{(k)}(M_\Phi)}{k!}\sum_{\mathbf{0}<\gamma_1<\alpha}\sum_{\mathbf{0}<\gamma_2<\gamma_1}\dots \sum_{\mathbf{0}<\gamma_{k-1}<\gamma_{k-2}} L_{\alpha-\gamma_1}L_{\gamma_1-\gamma_2}\dots L_{\gamma_{k-2}-\gamma_{k-1}}L_{\gamma_{k-1}}\\
& \leq \sum_{k=2}^{|\alpha|}\frac{\varphi^{(k)}(M_\Phi)}{k!}\sum_{\mathbf{0}<\gamma_1<\alpha}\sum_{\mathbf{0}<\gamma_2<\gamma_1}\dots \sum_{\mathbf{0}<\gamma_{k-1}<\gamma_{k-2}} L_{\alpha-\gamma_1}L_{\gamma_1-\gamma_2}\dots L_{\gamma_{k-2}-\gamma_{k-1}}L_{\gamma_{k-1}}.
\end{align*}

Finally for $|\alpha|\geq 2,\;\alpha\in\mathcal{I}$
\begin{align}\label{ocena L_alpha}
L_\alpha\leq \widetilde m &  \Big( K \alpha! r^{|\alpha|^3}! (2\mathbb{N})^{p\alpha}\nonumber\\
& +  \sum_{k=2}^{|\alpha|}\frac{\varphi^{(k)}(M_\Phi)}{k!}\sum_{\mathbf{0}<\gamma_1<\alpha}\sum_{\mathbf{0}<\gamma_2<\gamma_1}\dots \sum_{\mathbf{0}<\gamma_{k-1}<\gamma_{k-2}} L_{\alpha-\gamma_1}L_{\gamma_1-\gamma_2}\dots L_{\gamma_{k-2}-\gamma_{k-1}}L_{\gamma_{k-1}} \Big).
\end{align}
In the sequel, for the sake of clarity, we will provide an explicit form of the formula \eqref{ocena L_alpha} for the cases $|\alpha|=2$ and  $|\alpha|=3.$ For $\alpha\in \mathcal{I}$ such that $|\alpha|=2$ the following holds:
\[
L_\alpha\leq \widetilde m \Big(K\alpha! r^{|\alpha|^3}! (2\mathbb{N})^{p\alpha}+ \frac{\varphi''(M_\Phi)}{2!}\sum_{\mathbf{0}<\gamma_1<\alpha} L_{\alpha-\gamma_{1}}L_{\gamma_{1}} \Big), 
\]
which is for $\alpha=2\varepsilon_k,$ $k\in \mathbb{N}$
\[
L_{2\varepsilon_k}\leq \widetilde m \Big(K \;2!\; r^{|2\varepsilon_k|^3}!(2\mathbb{N})^{p 2 \varepsilon_k}+ \frac{\varphi''(M_\Phi)}{2!}L_{\varepsilon_k}^2 \Big)
\]
and for $\alpha=\varepsilon_k+\varepsilon_l,$ $k,l\in \mathbb{N},$ $k\neq l$
\[
L_{\varepsilon_k+\varepsilon_l}\leq \widetilde m \Big(K \;1!1!\; r^{|\varepsilon_k+\varepsilon_l|^3}! (2\mathbb{N})^{p(\varepsilon_k+\varepsilon_l)}+ \frac{\varphi''(M_\Phi)}{2!}2 L_{\varepsilon_k}L_{\varepsilon_l} \Big).
\]
Further, for $|\alpha|=3$ one obtains
\begin{align*}
L_\alpha\leq \widetilde m & \Big(K\alpha!r^{|\alpha|^3}!(2\mathbb{N})^{p\alpha} \\
& + \frac{\varphi''(M_\Phi)}{2!}\sum_{\mathbf{0}<\gamma_1<\alpha} L_{\alpha-\gamma_{1}}L_{\gamma_{1}} + \frac{\varphi^{(3)}(M_\Phi)}{3!}\sum_{\mathbf{0}<\gamma_1<\alpha}\sum_{\mathbf{0}<\gamma_2<\gamma_1} L_{\alpha-\gamma_1}L_{\gamma_1-\gamma_2}L_{\gamma_2}\Big), 
\end{align*}
which is for $\alpha=3\varepsilon_k,$ $k\in \mathbb{N}$
\[
L_{3\varepsilon_k}\leq \widetilde m \Big(K \;3!\; r^{3^3}!(2\mathbb{N})^{p 3 \varepsilon_k}+ \frac{\varphi''(M_\Phi)}{2!}2L_{2\varepsilon_k}L_{\varepsilon_k} + \frac{\varphi^{(3)}(M_\Phi)}{3!}L_{\varepsilon_k}^3 \Big),
\]
for $\alpha=2\varepsilon_k+\varepsilon_l,$ $k,l\in \mathbb{N},$ $k\neq l $
\begin{align*}
L_{2\varepsilon_k+\varepsilon_l}\leq \widetilde m & \Big(K \;2!1!  \; r^{3^3}!(2\mathbb{N})^{p(2\varepsilon_k+\varepsilon_l)}\\
& + \frac{\varphi''(M_\Phi)}{2!}\left( 2 L_{2\varepsilon_k}L_{\varepsilon_l} +2 L_{\varepsilon_k+\varepsilon_l}L_{\varepsilon_k}\right)+ \frac{\varphi^{(3)}(M_\Phi)}{3!} 3 L_{\varepsilon_k}^2 L_{\varepsilon_l}\Big)
\end{align*}
and for $\alpha=\varepsilon_k+\varepsilon_l+\varepsilon_s,$ $k,l,s\in \mathbb{N},$ $k\neq l, $ $k \neq s,$ $l\neq s$
\begin{align*}
L_{\varepsilon_k+\varepsilon_l+\varepsilon_s}  \leq &\widetilde m \Big(K\;1!1!1!  \;r^{3^3}!(2\mathbb{N})^{p(\varepsilon_k+\varepsilon_l+\varepsilon_s)} \\ \nonumber
& + \frac{\varphi''(M_\Phi)}{2!} \left( 2 L_{\varepsilon_k+\varepsilon_l}L_{\varepsilon_s} +2 L_{\varepsilon_k+\varepsilon_s}L_{\varepsilon_l} + 2 L_{\varepsilon_l+\varepsilon_s}L_{\varepsilon_k} \right)+ \frac{\varphi^{(3)}(M_\Phi)}{3!} 6 L_{\varepsilon_k}L_{\varepsilon_l}L_{\varepsilon_s} \Big).
\end{align*}

\textbf{Step 2.} This part of the proof is devoted to showing that there exist $r_0\geq 2$ and $p_0\geq 0$ such that
\begin{equation}\label{ih sp}
L_\alpha \leq  \alpha! (2\mathbb N)^{p_0\alpha}r_0^{|\alpha|^3}!\quad \mbox{for all}\quad \alpha\in \mathcal{I},\;|\alpha|\geq 1,
\end{equation}
where $p_0>p$ and $r_0>2(r+\widetilde{m}K+\widetilde{m}\widetilde{C}),$ $\widetilde C=(2C)^2$ for $C$ given in \eqref{const C}. The proof is derived by mathematical induction with respect to the length of the multiindex $\alpha\in \mathcal{I},$ i.e. with respect to $n=|\alpha|,$ $n\in \mathbb{N}.$

First, for $|\alpha|=1$ from \eqref{ocena L_epsilon} one directly obtains
\begin{equation}\label{ih 1sp}
L_{\alpha} \leq \alpha!\widetilde m K r^{|\alpha|^3}! (2\mathbb{N})^{p\alpha}\leq \alpha! (2\mathbb{N})^{p_0\alpha} r_0^{|\alpha|^3}!=\alpha!(2\mathbb{N})^{p_0\alpha} r_0!,
\end{equation}
since $p_0>p$ and $r_0>r+\widetilde m K.$

Next, for $|\alpha|=2,$ let $J_2$ be given as follows
\[
J_2=\sum_{\mathbf{0}<\gamma_1<\alpha} L_{\alpha-\gamma_{1}}L_{\gamma_{1}}.
\]
The sum over $\gamma_1\in \mathcal{I},$ where $\mathbf{0}<\gamma_1<\alpha,$ has $N(\alpha,2)\leq 2^{2|\alpha|}=2^4$ terms, as given in Lemma \ref{N(alfa, k)2}. Using this and the estimate \eqref{ih 1sp} for both terms $L_{\alpha-\gamma_{1}}$ and $L_{\gamma_{1}},$ since $|\alpha-\gamma_1|=|\gamma_1|=1,$ one obtains
\begin{align*}
J_2 & \leq  N(\alpha,2) (\alpha-\gamma_1)!(2\mathbb{N})^{p_0 (\alpha-\gamma_1)}r_0^{|\alpha-\gamma_1|^3}!(\gamma_1)!(2\mathbb{N})^{p_0 \gamma_1}r_0^{|\gamma_1|^3}!\leq  2^4 \alpha! (2\mathbb{N})^{p_0\alpha }r_0!r_0! \\
& \leq   2^4 \alpha!(2\mathbb{N})^{p_0\alpha }r_0^2!.
\end{align*}
Recall, the estimate \eqref{const C} says that since $\varphi $ is analytic and $\{u_0(t), t\in[0,T]\}$ is a compact set, then for all $k\in \mathbb N$ 
\[
\sup_{t\in[0,T]}\frac{\varphi^{(k)}(u_0(t))}{k!}\leq C^k, \quad \mbox{for some}\; C>0. 
\]
Hence, for $|\alpha|=2$ the following holds
\begingroup
\allowdisplaybreaks
\begin{align*}
L_{\alpha} \leq & \widetilde m \left( K\alpha!r^{|\alpha|^3}! (2\mathbb{N})^{p \alpha} + C^2 J_2\right)\leq \widetilde m \left( K\alpha!r^{|\alpha|^3}! (2\mathbb{N})^{p \alpha} + C^2 2^4\alpha! (2\mathbb{N})^{p_0\alpha}r_0^2! \right) \\
& \leq \alpha!(2\mathbb{N})^{p_0\alpha}r_0^{|\alpha|^3}!\left(\frac{\widetilde m K r^{|\alpha|^3}! }{r_0^{|\alpha|^3}!}+\frac{C^2 2^4 r_0^2!}{r_0^{|\alpha|^3}!}\right) \leq \alpha!(2\mathbb{N})^{p_0\alpha}r_0^{8}!\left(\frac{\widetilde m K r^8!}{r_0^{8}!}+\frac{C^2 2^4 r_0^2!}{r_0^{8}!}\right),
\end{align*}
\endgroup
since $p_0>p.$ Finally, using that $r_0^8!=(r_0^2r_0^6)!\geq r_0^2!r_0^6!$ and $r_0>2(r+\widetilde{m}K+\widetilde{m}\widetilde{C}),$ where $\widetilde C=(2C)^2,$ one obtains
\begin{align*}
L_{\alpha} & \leq \alpha!(2\mathbb{N})^{p_0\alpha}r_0^{8}!\left(\frac{\widetilde m K }{r_0^{8}}+\frac{C^2 2^4}{r_0^{6}!}\right)\leq  \alpha!(2\mathbb{N})^{p_0\alpha}r_0^{8}!\left (\frac{\widetilde m K}{r_0}+\frac{C^2 2^4}{r_0^2}\right)\\
& \leq \alpha!(2\mathbb{N})^{p_0\alpha}r_0^{8}!\left (\frac{1}{2}+\frac{1}{2}\right)=\alpha!(2\mathbb{N})^{p_0\alpha}r_0^{8}!.
\end{align*}
   
Now, let us assume that the inductive hypothesis \eqref{ih sp} is true for all $\alpha\in\mathcal{I}\setminus \{\mathbf{0}\}$ such that $|\alpha|\leq n.$ It remains to prove that then it also holds for $|\alpha|=n+1.$
   
We start with (\ref{ocena L_alpha}) for $|\alpha|\geq 2,$ $\alpha\in\mathcal{I}$:
\begin{align*}
L_\alpha\leq \widetilde m & \Big(K \alpha! r^{|\alpha|^3}! (2\mathbb{N})^{p\alpha} \\
& +  \sum_{k=2}^{|\alpha|}\frac{\varphi^{(k)}(M_\Phi)}{k!}\sum_{\mathbf{0}<\gamma_1<\alpha}\sum_{\mathbf{0}<\gamma_2<\gamma_1}\dots \sum_{\mathbf{0}<\gamma_{k-1}<\gamma_{k-2}} L_{\alpha-\gamma_1}L_{\gamma_1-\gamma_2}\dots L_{\gamma_{k-2}-\gamma_{k-1}}L_{\gamma_{k-1}} \Big).
\end{align*}
Let
\[
J_k=\sum_{\mathbf{0}<\gamma_1<\alpha}\sum_{\mathbf{0}<\gamma_2<\gamma_1}\dots \sum_{\mathbf{0}<\gamma_{k-1}<\gamma_{k-2}} L_{\alpha-\gamma_1}L_{\gamma_1-\gamma_2}\dots L_{\gamma_{k-2}-\gamma_{k-1}}L_{\gamma_{k-1}},\qquad k=2,\dots,|\alpha|. 
\]
This sum has $N(\alpha,k)\leq 2^{k|\alpha|}$ terms. Note that $1\leq |\alpha-\gamma_1|,|\gamma_1-\gamma_2|,\dots,|\gamma_{k-2}-\gamma_{k-1}|,|\gamma_{k-1}|\leq n.$ From \eqref{ih sp} and the fact that $ (\alpha-\gamma_1)!(\gamma_1-\gamma_2)!\cdots(\gamma_{k-2}-\gamma_{k-1})!(\gamma_{k-1})!\leq\alpha!$
this implies
\begin{align*}
J_k & \leq 2^{k|\alpha|} \alpha! (2\mathbb{N})^{p_0(\alpha-\gamma_1)}r_0^{|\alpha-\gamma_1|^3}!(2\mathbb{N})^{p_0(\gamma_1-\gamma_2)}r_0^{|\gamma_1-\gamma_2|^3}! \times \cdots\\
& \hspace*{4.5cm}\cdots \times (2\mathbb{N})^{p_0(\gamma_{k-2}-\gamma_{k-1})}r_0^{|\gamma_{k-2}-\gamma_{k-1}|^3}! (2\mathbb{N})^{p_0(\gamma_{k-1})}r_0^{|\gamma_{k-1}|^3}!\\
& \leq 2^{k|\alpha|} \alpha! (2\mathbb{N})^{p_0 \alpha}r_0^{|\alpha-\gamma_1|^3}!r_0^{|\gamma_1-\gamma_2|^3}!\cdots r_0^{|\gamma_{k-2}-\gamma_{k-1}|^3}! r_0^{|\gamma_{k-1}|^3}!.
\end{align*}   
Note that for $r_0\geq 2$ and $ |\alpha-\gamma_1|, |\gamma_1-\gamma_2|>0$ the  inequality \eqref{fakt nejed}  implies  
\[
r_0^{|\alpha-\gamma_1|^3}!r_0^{|\gamma_1-\gamma_2|^3}!\leq r_0^{|\alpha-\gamma_2|^3}!.
\]
Also for $\beta\in \mathcal{I}$ it holds
\[
(|\alpha|-|\beta|)^3+|\beta|^3\leq (|\alpha|-1)^3+1, \qquad \beta< \alpha, \; |\beta|\geq 1.
\]
The first inequality together with \eqref{fakt nejed}  implies
\[
r_0^{|\alpha-\gamma_1|^3}!r_0^{|\gamma_1-\gamma_2|^3}!\cdots r_0^{|\gamma_{k-2}-\gamma_{k-1}|^3}! r_0^{|\gamma_{k-1}|^3}! \leq r_0^{|\alpha-\gamma_{k-1}|^3+|\gamma_{k-1}|^3}!
\]
and now applying the second inequality, one obtains
\[
r_0^{|\alpha-\gamma_1|^3}!r_0^{|\gamma_1-\gamma_2|^3}!\cdots r_0^{|\gamma_{k-2}-\gamma_{k-1}|^3}! r_0^{|\gamma_{k-1}|^3}!\leq r_0^{(|\alpha|-1)^3+1}!.
\]
All these estimates imply
\[
J_k\leq  2^{k|\alpha|} \alpha!(2\mathbb{N})^{p_0 \alpha}r_0^{(|\alpha|-1)^3+1}!,\qquad k=2,\dots,|\alpha|. 
\]
Again using the estimate for the analytic function $\varphi$ one obtains 
\begin{align*}
L_\alpha & \leq \widetilde m \left(K\alpha! r^{|\alpha|^3}!(2\mathbb{N})^{p\alpha}+  \sum_{k=2}^{|\alpha|}C^k J_k \right)\\
& \leq \widetilde m \;\alpha!\left(Kr^{|\alpha|^3}!(2\mathbb{N})^{p\alpha}+  \sum_{k=2}^{|\alpha|}C^k 2^{k|\alpha|} (2\mathbb{N})^{p_0 \alpha}r_0^{(|\alpha|-1)^3+1}! \right)\\
& \leq \alpha!(2\mathbb{N})^{p_0 \alpha} \left(\widetilde m  Kr^{|\alpha|^3}! + \widetilde m r_0^{(|\alpha|-1)^3+1}! \sum_{k=2}^{|\alpha|}C^k 2^{k|\alpha|}  \right),
\end{align*}
provided by $p_0>p.$ Also, we need 
\[
\sum_{k=2}^{|\alpha|} C^k 2^{k|\alpha|}= \sum_{k=2}^{|\alpha|} (C2^{|\alpha|})^{k} \leq (C2^{|\alpha|})^{|\alpha|+1}=(2C)^{|\alpha|^2+|\alpha|}\leq \widetilde C^{|\alpha|^2},\qquad |\alpha|\geq 2,
\]
where $\widetilde C= (2C)^2.$ Continuing the series of estimates for $L_\alpha,\;|\alpha|=n+1$ one obtains
\begin{align*}
L_\alpha & \leq \alpha!(2\mathbb{N})^{p_0 \alpha} \left(\widetilde m  K r^{|\alpha|^3}!+ \widetilde m r_0^{(|\alpha|-1)^3+1}!  \widetilde C^{|\alpha|^2} \right)\\
&\leq \alpha!(2\mathbb{N})^{p_0 \alpha} r_0 ^{|\alpha|^3}!\left(\frac{\widetilde m  K r^{|\alpha|^3}!}{r_0^{|\alpha|^3}!}+ \frac{\widetilde m \widetilde C^{|\alpha|^2} r_0^{(|\alpha|-1)^3+1}!}{r_0^{|\alpha|^3}!} \right)\\
& \leq  \alpha!(2\mathbb{N})^{p_0 \alpha} r_0^{|\alpha|^3}!\left(\frac{\widetilde m  K}{r_0}+ \frac{\widetilde m \widetilde C^{(n+1)^2} r_0^{n^3+1}!}{r_0^{(n+1)^3}!} \right).
\end{align*}
Finally, using $a^{b+c}\geq a^b+a^c,$ for $a\geq 2$, we have
\[
\frac{r_0^{n^3+1}!}{r_0^{n^3+3n^2+3n+1}!}\leq \frac{1}{r_0^{3n^2+3n}!} \, .
\]
Thus, we obtain 
\begin{align*}
L_\alpha & \leq \alpha!(2\mathbb{N})^{p_0 \alpha} r_0^{|\alpha|^3}!\left(\frac{\widetilde m  K}{r_0}+ \frac{\widetilde m \widetilde C^{(n+1)^2} }{r_0^{3n^2+3n}!} \right)
\leq \alpha!(2\mathbb{N})^{p_0 \alpha} r_0^{|\alpha|^3}!\left(\frac{\widetilde m  K}{r_0}+ \frac{(\widetilde m \widetilde C)^{(n+1)^2} }{r_0^{(n+1)^2}} \right) \\
&\leq \alpha!(2\mathbb{N})^{p_0 \alpha} r_0^{|\alpha|^3}!,
\end{align*}
since $r_0>2(r+\widetilde{m}K+\widetilde{m}\widetilde{C}).$

\textbf{Step 3.} In this last step it is left to show that $u\in C([0,T],X)\otimes (FS)',$
i.e. that there exist $s\geq 2$  and $q\geq 0$ such that $u\in C([0,T],X)\otimes (FS)_{-s,-q}.$  Namely, it will be shown that for $u(t,\omega)=\sum_{\alpha\in\mathcal{I}} u_\alpha(t) H_\alpha(\omega)$ there exist $s\geq 2$ and $q\in \mathbb{N}$ such that
\begin{equation}\label{cilj sp}
\sum_{\alpha\in\mathcal{I}}\sup_{t\in [0,T]}\|u_\alpha(t)\|^2_X \alpha! (s^{|\alpha|^3}!)^{-1} (2\mathbb{N})^{-q\alpha}=\sum_{\alpha\in\mathcal{I}}L_\alpha^2  \alpha!(s^{|\alpha|^3}!)^{-1} (2\mathbb{N})^{-q\alpha}< \infty.
\end{equation}
Note that, for all $\alpha\in\mathcal{I},\;|\alpha|\geq 1$ by \eqref{ih sp} and \eqref{fakt nejed} it holds that
\[
L_\alpha^2  \leq \alpha!^2(2\mathbb{N})^{2 p_0 \alpha} (r_0^{|\alpha|^3}!)^2\leq \alpha!^2(2\mathbb{N})^{2 p_0 \alpha} r_0^{2|\alpha|^3}!.
\]
Moreover, since  $\alpha!\leq r_0^{|\alpha|^3}!$ then  for all  $\alpha\in\mathcal{I},$ $|\alpha|\geq 1$ it holds that
\[
L_\alpha^2  \leq (2\mathbb{N})^{2 p_0 \alpha} r_0^{4|\alpha|^3}!\,.
\]
Finally,  the series \eqref{cilj sp} is convergent since for $s=r_0^5$ and $q>2p_0+1$ the following holds
\begin{align*}
 \sum_{\alpha\in\mathcal{I}}L_\alpha^2 \alpha! (s^{|\alpha|^3}!)^{-1} (2\mathbb{N})^{-q\alpha} & \leq M_\Phi^2+\sum_{\alpha\in\mathcal{I}\setminus\{\mathbf{0}\}} (2\mathbb{N})^{2 p_0 \alpha} r_0^{4|\alpha|^3}! r_0^{|\alpha|^3}! (r_0^{5|\alpha|^3}!)^{-1} (2\mathbb{N})^{-q\alpha} \\
& <M_\Phi^2+\sum_{\alpha\in\mathcal{I}\setminus\{\mathbf{0}\}} (2\mathbb{N})^{(2 p_0-q) \alpha} <\infty, 
\end{align*}
where we used that $2p_0-q<-1$ and $M_\Phi$ is given in \eqref{ocena u^0}.  
\end{proof}

\begin{remark}
Note that if assumption (A4) is violated in so far that the governing equation \eqref{nelinearna det} has more than one solution (as many nonlinear PDEs have non-unique solutions), then each of its solutions $u_\mathbf{0}(t),$ $t\in[0,T]$ will generate a corresponding family of operators $B_{\alpha,\Phi}(t)=A_{\alpha}+\Phi'(u_\mathbf{0}(t))\,Id$, $t\in[0,T],$ $\alpha\in\mathcal I\setminus\{\mathbf{0}\}$, which further generates a corresponding semigroup $S_{\alpha,\Phi}(t,s),$ $0\leq s\leq t\leq T,$ $\alpha\in\mathcal I\setminus\{\mathbf{0}\}$ that provides the coordinatewise solutions $u_\alpha(t)$, $t\in[0,T],$ $\alpha\in\mathcal I\setminus\{\mathbf{0}\}$, via \eqref{sistem 3}. Hence each solution $u_\mathbf{0}(t),$ $t\in[0,T]$ of the PDE \eqref{nelinearna det} will provide its own family of recursively generated coefficients $u_\alpha(t)$, $t\in[0,T],$ $\alpha\in\mathcal I\setminus\{\mathbf{0}\}$, and therefore also a stochastic solution $u(t),$ $t\in[0,T]$ to the initial SPDE \eqref{NLJ}. The nonlinear stochastic equation \eqref{NLJ} will therefore have exactly as many solutions as the equation \eqref{nelinearna det} in  (A4).
\end{remark}

\section{Applications}\label{2.0}

In this section we present some examples of stochastic nonlinear equations that are of the form \eqref{PNLJ} and on which our proposed method and hence also Theorem \ref{main} can be applied. We fully work out the first example (the heat equation with an exponential nonlinearity) as an illustration and note that all subsequent  examples can be solved by a similar pattern following the formula derived in the proof of Theorem \ref{main}.

\subsection{Stochastic heat equations with exponential nonlinearity}
\label{2.2}

The stochastic reaction-diffusion equation with Wick-exponential type nonlinearity of the form 
\begin{align*}
u_t &= \triangle u + a+ be^{\lozenge u} + f, \quad a, b \not=0,\, ab<0,\\
u(0) &= u^0
\end{align*}
can be solved by applying Theorem \ref{main}.  This equation is also called the stochastic   Fujita--Gelfand equation in combustion. Here the nonlinearity $\Phi^{\lozenge}(u) = e^{\lozenge u}=\sum_{n=1}^\infty\frac1{n!}u^{\lozenge n}$ is a convex function that  appears in explosion dynamics.  The corresponding deterministic  problem is used to model convection in a heat transfer process when exothermic chemical reactions occur in the fluid body \cite{Jones}. 

We analyze the one-dimensional case when $f$ is an $\mathbb R-$valued process with zero expectation. For technical simplicity let $a=2, $ $b=-2$ and we consider an initial condition of the form $u^0(x,\omega)=u_\mathbf{0}^0(x)+N(\omega)=u_\mathbf{0}^0(x)+\sum_{\alpha\in\mathcal I}H_{\alpha}(\omega)$, where $N$ denotes a spatial random noise with expectation of the form $u_\mathbf{0}^0(x)=-2\ln(1+e^{-x})$ and constant noise part given by the coefficients $u_\alpha^0(x)=1$, $\alpha\in\mathcal I\setminus\{\mathbf{0}\}$. Equation \eqref{nelinearna det} hence reduces to
\[ 
\frac{d}{dt} u_\mathbf{0}(t,x) = \frac{d^2}{dx^2} u_\mathbf{0}(t,x)+2-2e^{u_\mathbf{0}(t,x)},\quad u_\mathbf{0}(0,x)=-2\ln(1+e^{-x}). 
\]
By \cite{Polyanin} this equation has a solution to the form
\begin{equation}\label{mojanulta} 
u_\mathbf{0}(t,x) = -2\ln(1+e^{-x-t})
\end{equation}
that is indeed  of $C^1([0,T],\mathbb R)$ class, hence (A4) is satisfied. For the nonlinearity $\Phi(u)=2-2e^{u}$ we have $\Phi'(u)=-2e^u$, hence by \eqref{sistem 2}, the corresponding family of operators is
\[
B_{\alpha,\Phi}(t)=\triangle+\Phi'(u_\mathbf{0}(t)) = \frac{d^2}{dx^2}-2(1+e^{-x-t})^{-2}, 
\] 
actually not depending on $\alpha$. The associated semigroup to this operator will be of the form $S_{\alpha,\Phi}(t,s)=e^{-2t(1+e^{-x-s})^{-2}}Z_t,$ where $Z_t$ is the semigroup associated to the Laplace operator. Hence, $S_{\alpha,\Phi}$  also does not depend on $\alpha$ and reads as
\[ 
S_{\alpha,\Phi}(t,s) \, g(x) = e^{-2t(1+e^{-x-s})^{-2}}\frac{1}{\sqrt{4\pi t}}\int_{\mathbb R}g(x-y)e^{-\frac{y^2}{4t}}dy,\quad\mbox{ for }\;  g\in L^p(\mathbb R).
\]
Its action onto the initial condition will be
\[ 
S_{\alpha,\Phi}(t,0)\, u_\alpha^0(x) = e^{-2t(1+e^{-x})^{-2}}\frac{1}{\sqrt{4\pi t}}\int_{\mathbb R}e^{-\frac{y^2}{4t}}dy = e^{-2t(1+e^{-x})^{-2}}. 
\]
By \eqref{ostatak} we have
\begin{align*}
r_{\alpha,\Phi}(t,x)= -2(1+e^{-x-t})^{-2}\sum_{k=2}^{|\alpha|}\frac{1}{k!}&\sum_{\mathbf{0}<\gamma_1<\alpha}  \sum_{\mathbf{0}<\gamma_2<\gamma_1} \dots \sum_{\mathbf{0}<\gamma_{k-1}<\gamma_{k-2}}\\
& u_{\alpha-\gamma_1}(t,x)u_{\gamma_1-\gamma_2}(t,x)\dots u_{\gamma_{k-2}-\gamma_{k-1}}(t,x) u_{\gamma_{k-1}}(t,x),
\end{align*} 
and $g_{\alpha,\Phi}(t,x)=r_{\alpha,\Phi}(t,x)+f_\alpha(t,x)$. Finally, by \eqref{sol nehom} we obtain the coefficients $u_\alpha$ of the solution recursively  as:
\begin{align}\label{mojaalfta}\nonumber
& u_{\alpha}(t,x)= e^{-2t(1+e^{-x})^{-2}}+\int_0^t e^{-2t(1+e^{-x-s})^{-2}}\Big(\frac{1}{\sqrt{4\pi t}}\int_{\mathbb R}
\Big[ f_\alpha(s,x-y)-2(1+e^{-x-y-s})^{-2} \times \\\nonumber
&\times\sum_{k=2}^{|\alpha|}\frac{1}{k!}\sum_{\mathbf{0}<\gamma_1<\alpha}  \sum_{\mathbf{0}<\gamma_2<\gamma_1} \dots \sum_{\mathbf{0}<\gamma_{k-1}<\gamma_{k-2}}
u_{\alpha-\gamma_1}(s,x-y)u_{\gamma_1-\gamma_2}(s,x-y)\dots u_{\gamma_{k-1}}(s,x-y)\Big]\\
&\times e^{-\frac{y^2}{4t}}dy\Big)ds,
\end{align} 
for $\alpha>  \mathbf{0}$. The final solution is thus fully determined by \eqref{mojanulta}  and \eqref{mojaalfta} and it is given by the chaos expansion $u(t,x,\omega)=\sum_{\alpha\in\mathcal I}u_\alpha(t,x)H_\alpha(\omega)$.

\subsection{Stochastic heat equations with polynomial  nonlinearities}
\label{2.1}

Stochastic nonlinear heat equations with Wick-polynomial nonlinearities of the form 
\begin{align*}
u_t &= \triangle u + a u + b u^{\lozenge n} + f, \quad a, b \not=0, \, n>1\\
u(0) &= u^0
\end{align*}
where $f\in C([0,T],X)\otimes (FS)'$ is a generalized stochastic process and $u^0\in X\otimes (FS)'$ is a generalized random variable, belong to the class of problems \eqref{PNLJ}.  These  problems arise in quantum mechanics, plasma physics and mathematical biology. 
 
Particularly, the Fisher--KPP equation is one of the most important models in mathematical biology and ecology. It describes a population that evolves under the combined effects of spatial diffusion and local logistic growth and saturation, for example in  migration and population behavior in biological invasion.  In one spatial dimension the stochastic Fisher--KPP equation is given by 
\begin{align*}
u_t &= \triangle u + u -  u^{\lozenge 2}\\
u(0) &= u^0.
\end{align*}

The stochastic Allen–-Cahn equation  is of the form
\begin{align*}
u_t &= \triangle u + u -  u^{\lozenge 3} + f, 
\\
u(0) &= u^0. 
\end{align*}
It is used to describe phase separation and the evolution of interfaces between the phases for systems without mass conservation taking into account the effects due to the thermal fluctuations of the system. 

Amplitude equations are used as models for  different stripes patterns such as ripples in sand, stripes of seashells arising in a variety of natural phenomena.  One of the most well known amplitude equation is the Newell--Whitehead--Segel equation  which describes the appearance of the stripe pattern in two dimensional systems.   The Newell--Whitehead--Segel  equation models the interaction of the effect of the diffusion term with the nonlinear effect of the reaction term.  Moreover, this type of equations describes the dynamical behavior near the bifurcation point of the Rayleigh--Benard convection of binary fluid mixtures. 

The stochastic Newell--Whitehead--Segel equation is an reaction-diffusion equation of the form 
\begin{align*}
u_t &= \triangle u + a u - b u^{\lozenge n} + f, \quad a, b > 0, \quad n\geq 2\\
u(0) &= u^0. 
\end{align*}

\subsection{Stochastic heat equations with logarithmic nonlinearities}
\label{2.3}

Stochastic heat equations with logarithmic nonlinearities  of the form 
\begin{align*}
u_t &= \triangle u +  \log ^{\lozenge}(1+ u) + f\\
u(0) &= u^0,
\end{align*}
with $\log ^{\lozenge}(1+ u) = \sum_{n=1}^\infty(-1)^{n+1}\frac{1}{n}\;u^{\lozenge n}$,
as well as of the form 
\begin{align*}
u_t &= \triangle u + u \lozenge \log^{\lozenge} |u| + f\\
u(0) &= u^0
\end{align*}
belong to the class \eqref{PNLJ} and can be solved by Theorem \ref{main}.

\subsection{Other types of nonlinearities}
\label{2.4}

Other types of nonlinearities $\Phi$ which are  analytic functions such as trigonometric functions or hyperbolic functions  can be considered in the form \eqref{PNLJ}.  For example, a nonlinear stochastic equation of the form
\begin{align*}
u_t &= \triangle u + \cos^{\lozenge}(u)+ \, \cosh^{\lozenge}( u )+ f, \quad a \not=0\\
u(0) &= u^0
\end{align*}
with 
\begin{align*}
\Phi^{\lozenge} (u) &= \cos^{\lozenge} u + \cosh^{\lozenge} u \\&= \sum_{n=0}^\infty (-1)^n\frac{1}{(2n)!} u^{\lozenge(2n)} + \sum_{n=0}^\infty \frac{1+(-1)^n}{2(n)!} u^{\lozenge n} = \sum_{n=0}^\infty \frac{1}{(4n)!}\;u^{\lozenge(4n)} 
\end{align*}
satisfies the conditions of Theorem \ref{main} and has a unique almost classical solution in  $C([0,T],X)\otimes (FS)'$.

\subsection{Stochastic reaction-diffusion equation with multiplicative noise}
\label{2.5}

In \cite{Flandoli} global existence and uniqueness is proved for a stochastic reaction-diffusion equation with polynomial nonlinearity \eqref{special Phi} and multiplicative noise in a bounded domain.  These equations can be generalized to problems  of the form 
\begin{align}\label{stoch RD}
u_t &= \triangle u +  \Phi^{\lozenge}(u) + u \lozenge W_t\\
u(0) &= u^0, \nonumber
\end{align}
where $W_t$ denotes the white noise process.  Our proposed method can be applied also to the problem \eqref{stoch RD}. Equations of the form \eqref{stoch RD} with nonlinearities  $\Phi$ which are not necessarily polynomial, but rather belong to another class of analytic functions  (exponential, logarithmic, trigonometric or hypergeometric)  can be solved with our method. Additionally, instead of a multiplicative white noise, we can consider a general type of noise terms $u\lozenge G_t$ following the same procedure as we did in \cite{Milica}.


\section*{Conclusion and discussions}
In this paper we have studied abstract stochastic Cauchy problems of the form  $u_t= {\textbf A} u  + \Phi^{\lozenge}(u) + f$, $u(0)=u^0$, driven by unbounded linear operators that generate $C_0$ semigroups; including but not limited to second order elliptic differential operators that arise in most real-world applications. We have established an appropriate theoretical framework and proven existence and uniqueness of a solution in appropriate weighted spaces of generalizes stochastic processes. The approach can be generalized also to stochastic equations of the form 
\begin{align*}
	u_t &= {\textbf A} u +   {\textbf B} \lozenge u + \Phi^{\lozenge}(u) + f\\
	u(0) &= u^0. 
\end{align*} 
Multiplicative noise nonlinearities can be also considered in the form of a more general equation
\begin{align*}
	u_t &= {\textbf A} u +   {\textbf B} \lozenge \Psi^{\lozenge}(u) + \Phi^{\lozenge}(u) + f\\
	u(0) &= u^0, 
\end{align*}
with two analytic functions $\Psi$ and $\Phi$, and two operators ${\textbf A}$ and ${\textbf B}$, separately modeling the
part with ordinary multiplication and the part with the Wick product. These proofs can be carried out using the guidelines provided in \cite{Milica} and \cite{Milica2} as well as the results of this paper.

Future work will focus on numerical simulations and the implementation of explicit solutions through calculated coefficients of the chaos expansion. Examples of practical applications of the results obtained include the stochastic Navier-Stokes equation and other diffusion models from an applied perspective, such as \cite{Stefan}. We emphasize again that the primary benefit of our approach is that, through this procedure, we obtain explicit solutions in terms of an easily truncable infinite series. The truncation can be made to approximate the theoretical solution reasonably well by applying appropriate error estimates on the truncation, which are based on the norm estimates of $(FS)'-$spaces established in the main theorem. This is crucial for numerical simulations and practical applications of different diffusion and reaction-diffusion models.

Further investigation could focus on a more thorough comparison between the Wick and ordinary products in situations where the operator ${\textbf A}$ has sufficiently regular coefficients to yield solutions in test spaces rather than generalized spaces. This would enable the consideration of models with the structure $u_t= {\textbf A} u + \Phi(u) + f$, $u(0)=u^0$. This is a pertinent question, and studies that look into the relationship between the two kinds of products typically only look at one kind of equation (e.g. the Burgers equation in \cite{Grothaus}, the fluid pressure equation in \cite{HOUZ} or fluid flow equation in \cite{Wan}), whereas this paper studies a whole class of nonlinear SPDEs, so a comparison of that kind is currently out of scope. We note that each specific equation would have to be handled separately and each specific equation would result in a different type of error estimate between the two types of products. 

Finally, our future work will also include a study of stochastic evolution equations  $u_t=\textbf{M} u+\Phi^\lozenge (u)+f,$ $u(0)=u^0,$ driven by a nonlinear operator $\textbf{M}$ which is nonlinear perturbation of an unbounded linear operator $\textbf{A}.$




\end{document}